 \newtheorem{theorem}{Theorem}[section]
 \newtheorem{proposition}[theorem]{Proposition}
 \newtheorem{corollary}[theorem]{Corollary}
 \newtheorem{remark}[theorem]{Remark}
 \newtheorem{lemma}[theorem]{Lemma}
 \newtheorem{thmm}{Theorem}
 \newtheorem{corr}[thmm]{Corollary}
 \theoremstyle{definition}
 \newtheorem{example}[theorem]{Example}
 \newcommand{\Ker}{\mathop{\rm Ker}\nolimits}
 \newcommand{\im}{\mathop{\rm Im}\nolimits}
  \newcommand{\Hom}{\mathop{\rm Hom}\nolimits}
 \newcommand{\wX}{{\widetilde{X}}}
 \newcommand{\wY}{{\widetilde{Y}}}
 \newcommand{\wf}{{\tilde{f}}}
 \newcommand{\hX}{{\widehat{X}}}
 \newcommand{\hF}{{\widehat{F}}}
 \newcommand{\hH}{{\widehat{H}}}
 \newcommand{\HEG}{{\mathcal G}}
 \newcommand{\UU}{\mathcal{U}}
 \newcommand{\NN}{{\mathbb{N}}}
 \newcommand{\ZZ}{{\mathbb{Z}}}
 \newcommand{\RR}{{\mathbb{R}}}
\newcommand{\ignore}[1]{} 
\def\hhmm{\number\hh:\ifnum\mm<10{}0\fi\number\mm}
\begin{document}

\title[Uncountable Groups]{Uncountable groups and the geometry of inverse limits of coverings}

\author[G. Conner]{Gregory R. Conner$^1$}
\address{
Department of Mathematics,
Brigham Young University,
Provo, UT 84602, USA}
\email{conner@mathematics.byu.edu}

\author[W. Herfort]{Wolfgang Herfort$^2$}
\address{
Institute for Analysis and Scientific Computation
Technische Universit\"at Wien
Wiedner Hauptstra\ss e 8-10/101}
\email{wolfgang.herfort@tuwien.ac.at}

\author[C. Kent]{Curtis Kent$^3$}
\address{
Department of Mathematics,
Brigham Young University,
Provo, UT 84602, USA}
\email{curtkent@mathematics.byu.edu}

\author[P. Pave\v si\'c]{Petar Pave\v si\'c$^4$}
\address{
Faculty of Mathematics and Physics
University of Ljubljana
Jadranska 21
Ljubljana, Slovenia}
\email{petar.pavesic@fmf.uni-lj.si}

\thanks{
$^1$ Supported by Simons Foundation Collaboration Grant 646221.}
\thanks{$^2$ The second author is grateful for the warm hospitality at the Mathematics Department of BYU in February 2018.}
\thanks{$^3$ Supported by Simons Foundation Collaboration Grant 587001.}
\thanks{$^4$ Supported by the Slovenian Research Agency program P1-0292 and grants N1-0083, N1-0064}

\date{\today}

\begin{abstract}

In this paper we develop a new approach to the study of uncountable fundamental groups by using Hurewicz fibrations with the unique path-lifting property (\emph{lifting spaces} for short) as a replacement for
covering spaces.  In particular, we consider the inverse limit of a sequence of covering spaces of $X$.  It is known that the path-connectivity of the inverse limit can be expressed by means of the derived inverse limit functor $\varprojlim^1$, which is, however, notoriously difficult to compute when the $\pi_1(X)$ is uncountable.
To circumvent this difficulty, we express the set of path-components of the inverse limit, $\wX$, in terms of the functors $\varprojlim$ and $\varprojlim^1$ applied to sequences of countable groups arising from polyhedral approximations of $X$.

A consequence of our computation is that path-connectedness of lifting space implies that $\pi_1(\wX)$ supplements $\pi_1(X)$ in $\check\pi_1(X)$ where $\check\pi_1(X)$ is the inverse limit of fundamental groups of polyhedral approximations of $X$.  As an application we show that $\HEG\cdot \Ker_\ZZ(\hF)= \hF\ne\HEG\cdot \Ker_{B(1,n)}(\hF)$, where $\widehat F$ is the canonical inverse limit of finite rank free groups, $\HEG$ is the fundamental group of the Hawaiian Earring, and $\Ker_A(\hF)$ is the intersection of kernels of homomorphisms from $\widehat{F}$ to $A$.


\subjclass[2010]{Primary 54A20; Secondary 54B35,  54F17, 57M10, 20F34   }
\keywords{Peano continuum, inverse limit functor, derived inverse
functor, covering space, lifting space, Hawaiian earring, commutator subgroup}
\end{abstract}

\maketitle
\section{Introduction}

A famous theorem of Shelah \cite{Shelah} states that the fundamental
groups of Peano continua present a striking dichotomy: they are either
finitely presented or uncountable. The first case corresponds to
fundamental groups of finite polyhedra and covering space theory has traditionally been an
effective geometric approach to the study of the structure of these groups.
The second case is by no means exotic either: Peano continua with uncountable fundamental
group arise naturally as attractors of dynamical systems \cite{Hata85}, as fractal spaces
\cite{Mandelbrot85,Massopust89}, as boundaries of non-positively curved groups
\cite{KapovichKleiner00}, and in many other situations. The fundamental difference between
the two cases is of a local nature.
If the fundamental group of a Peano continuum $X$ is uncountable, then by \cite{Shelah} it is
not semilocally simply connected at some point. As a consequence most of
the subgroups of $\pi_1(X)$ do not correspond to a covering space over $X$, which represents a major obstacle
for a geometric study of these groups. This work is part of a wider program to study fundamental
groups of `wild' Peano continua where the role of covering spaces is taken by more general fibrations with
the unique path-lifting
property. These fibrations were introduced by Spanier \cite[Chapter 2]{Spanier} who developed much
of the theory of covering spaces within this more general setting. Since the term `Hurewicz fibration
with the unique path-lifting property' is somewhat impractical we call them \emph{lifting spaces}
and the corresponding maps from the total space to the base are called \emph{lifting projections}.

The main advantage of lifting spaces over covering spaces is that the former are closed with respect to
composition and arbitrary inverse limits - see \cite[Section II.2]{Spanier}. Most notably, the inverse limit of
a sequence of covering spaces over $X$ is always a lifting space over $X$ (and is not a covering
projection, unless the sequence is eventually constant). If $X$ is semilocally simply-connected (e.g.,
a CW-complex), and the sequence is not eventually constant, then the limit lifting space is path-disconnected
(see Corollary \ref{cor:cov over CW}).

This is a geometric reflection of the fact that the fundamental group of the base is
countable while the fibre of the lifting projection is uncountable. However, if $X$ is not semilocally
simply-connected, then its fundamental group is uncountable by Shelah's theorem and the limit space
can be path-connected or not path-connected, depending on the interplay between $\pi_1(X)$ and the sequence
of subgroups corresponding to the coverings in the inverse sequence.

The path-components of an inverse limit of covering spaces are classically determined by the derived inverse limit functor $\varprojlim^1$ applied to a sequence of subgroups of $\pi_1(X)$.  Difficulties arise in the computation of $\varprojlim^1$ for inverse sequences of uncountable groups, which make this an ineffective approach to determining path-connectivity
(see discussion at the end of Section \ref{sec:Preliminaries on inverse limits of spaces and groups}
and Examples 3.7-3.11 in Section \ref{sec: inverse limits of coverings})
.  In Section 3, we consider inverse sequences of coverings over some polyhedral expansion of the base space $X$ and study the path-connectedness of the limit.  We prove the following result, which allows one to work with inverse sequences of countable groups and thus avoid the difficulties of computing $\varprojlim^1$ for uncountable groups.

\begin{thmm}[Theorem \ref{thm:limit over PC}]
Every inverse limit of covering projections over a Peano continuum $X$ is homeomorphic to an inverse limit of covering projections over a polyhedral expansion of $X$.\end{thmm}

The main result of Section 3, Theorem \ref{thm:expansion lim1}, completely describes
the fundamental group and the set of path components of an inverse limit of coverings over
some polyhedral expansions of a Peano continuum. The statement is quite technical, but it leads to
the following important consequence which characterizes path-connectedness of the limit space.

\begin{corr}[Corollary \ref{cor:expansion 0-conn}]
Let $p\colon \wX\to X$ be the inverse limit of covering maps over a Peano continuum $X$. Then $\wX$ is path-connected if, only if, the inverse sequence of fundamental groups, $\pi_1(\wX_i)$,
satisfies the Mittag-Leffler property and the natural homomorphism
$$\pi_1(X)\longrightarrow \varprojlim
\left(\frac{\pi_1(X_i)}{\pi_1(\wX_i)}\right)$$
is surjective, where $\wX_i\to X_i$ are covering projections of a polyhedral expansion $\{X_i\}$ of $X$.\end{corr}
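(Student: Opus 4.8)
The plan is to obtain the Corollary by specializing the complete description of the path-component set $\pi_0(\wX)$ supplied by Theorem \ref{thm:expansion lim1} to the case $\pi_0(\wX)=\ast$. The conceptual mechanism I would make explicit first is the monodromy correspondence for lifting projections. Since $p\colon\wX\to X$ has unique path lifting and $X$ is path-connected, any point of $\wX$ can be joined by a lifted path to the fibre $F:=p^{-1}(x)$ over the base point, while two points of $F$ lie in the same path-component of $\wX$ exactly when they lie in one orbit of the monodromy action of $\pi_1(X,x)$ on $F$. As $\wX=\varprojlim\wX_i$ and $X=\varprojlim X_i$, this fibre is $F=\varprojlim F_i$ with $F_i=\pi_1(X_i)/\pi_1(\wX_i)$, and the $\pi_1(X)$-action factors through $\check\pi_1(X)=\varprojlim\pi_1(X_i)$. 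Hence $\wX$ is path-connected if and only if $\pi_1(X)$ acts transitively on $\varprojlim F_i$, and I would split this transitivity into the two advertised conditions.

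The short coset sequences $1\to\pi_1(\wX_i)\to\pi_1(X_i)\to F_i\to\ast$ assemble into a tower, yielding the six-term $\varprojlim/\varprojlim^1$ exact sequence of pointed sets
$$1\to\varprojlim\pi_1(\wX_i)\to\varprojlim\pi_1(X_i)\xrightarrow{\ \phi\ }\varprojlim F_i\xrightarrow{\ \delta\ }\varprojlim{}^1\pi_1(\wX_i)\to\varprojlim{}^1\pi_1(X_i).$$
Exactness at $\varprojlim F_i$ identifies the $\check\pi_1(X)$-orbit of the base point with $\delta^{-1}(\ast)=\im\phi$, and the fibres of $\delta$ are precisely the $\check\pi_1(X)$-orbits on $\varprojlim F_i$. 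Because the $X_i$ are finite polyhedra the groups $\pi_1(X_i)$ are finitely presented, so for a polyhedral expansion with $\pi_1$-surjective bonding maps one has $\varprojlim^1\pi_1(X_i)=\ast$; then $\delta$ maps onto $\varprojlim^1\pi_1(\wX_i)$, and $\check\pi_1(X)$ acts transitively on $\varprojlim F_i$ if and only if $\varprojlim^1\pi_1(\wX_i)=\ast$. Since the $\pi_1(\wX_i)$ are subgroups of countable groups, the standard equivalence of Mittag--Leffler with the vanishing of $\varprojlim^1$ for towers of countable groups turns the last condition into the Mittag--Leffler property of $\{\pi_1(\wX_i)\}$. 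This is condition (1).

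Granting condition (1), $\varprojlim F_i$ is a single $\check\pi_1(X)$-orbit, so transitivity of the smaller group $\pi_1(X)$ is equivalent to its image already exhausting that orbit, i.e.\ to surjectivity of the base-point orbit map $\pi_1(X)\to\varprojlim F_i$, $\gamma\mapsto(\gamma_i\,\pi_1(\wX_i))_i$, where $\gamma_i$ is the image of $\gamma$ in $\pi_1(X_i)$. This map is exactly the natural homomorphism $\pi_1(X)\to\varprojlim\big(\pi_1(X_i)/\pi_1(\wX_i)\big)$, so its surjectivity is condition (2). Combining the two equivalences proves the Corollary; the same conclusion is what one reads off Theorem \ref{thm:expansion lim1} by setting $\pi_0(\wX)=\ast$, with the $\varprojlim^1$-term and the cokernel of $\pi_1(X)\to\varprojlim F_i$ as the two ends of its exact sequence. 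I note that in this setting condition (2) in fact forces condition (1), but both are recorded as the natural constituents of the description.

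The step demanding the most care --- and the main obstacle --- is the nonabelian, pointed-set bookkeeping of the middle paragraph: $\varprojlim^1$ of a tower of nonabelian groups is only a pointed set, the six-term sequence is exact merely as pointed sets, and the identification of the fibres of $\delta$ with $\check\pi_1(X)$-orbits must be argued in that language. In particular one must justify that $\delta$ is genuinely onto $\varprojlim^1\pi_1(\wX_i)$ --- so that triviality of $\delta$ is equivalent to, and not merely implied by, the vanishing of $\varprojlim^1\pi_1(\wX_i)$ --- which is where the hypothesis $\varprojlim^1\pi_1(X_i)=\ast$ (arranged by choosing the expansion with $\pi_1$-surjective bonding maps) is indispensable, and where the countable-tower form of the Mittag--Leffler equivalence is invoked.
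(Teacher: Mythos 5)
Your proposal is correct and takes essentially the same route as the paper: the paper's proof likewise specializes Theorem \ref{thm:expansion lim1} to the case $\pi_0(\wX)=\ast$ and then converts the vanishing of $\varprojlim^1\pi_1(\wX_i)$ into the Mittag--Leffler property via Proposition \ref{prop:lim1 and ML}, using countability of the groups $\pi_1(\wX_i)$. Your monodromy/six-term-sequence argument, including the surjectivity of the connecting map guaranteed by $\varprojlim^1\pi_1(X_i)=\ast$ (which follows from the surjective bonding maps, not from finite presentability), is precisely the diagram analysis the paper carries out before stating Theorem \ref{thm:expansion lim1}, and your observation that surjectivity onto $\varprojlim\bigl(\pi_1(X_i)/\pi_1(\wX_i)\bigr)$ already forces Mittag--Leffler mirrors the paper's closing remark that the identification $\check\pi_1(X)/\check\pi_1(\wX)\cong\varprojlim\bigl(\pi_1(X_i)/\pi_1(\wX_i)\bigr)$ is itself a consequence of the Mittag--Leffler property.
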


Even for the well-studied Hawaiian earring group $\HEG$, apart from the fact that $\HEG$ contains the free group of countable rank and is thus dense in its shape group $\hF$, very little is known about the size of $\HEG$ in $\hF$. The above corollary leads to interesting algebraic applications concerning the image of the fundamental group into the shape group which we consider in Section 4.

Given groups $G$ and $A$, we let $\Ker_A(G)$ be the intersection of kernels of all homomorphisms from $G$ to $A$. If $G$ is a free group and $B(1,n)$ is a Baumslag-Solitar group, then $\Ker_\ZZ(G)$ is exactly the commutator subgroup of $G$ and $\Ker_{B(1,n)}(G)$ is exactly the second derived subgroup of $G$, see \cite{Conner-Kent-Herfort-Pavesic}. However, $\hF$ is locally free but non-free and the corresponding $\Ker_\ZZ(\hF)$ properly contains the commutator subgroup of $\hF$. In fact, $\Ker_\ZZ(\hF)$ is sufficiently large to be a supplement of $\HEG$ in $\hF$ while $\Ker_{B(1,n)}(\hF)$ is not.

\begin{thmm}[Theorem \ref{thm:HEG internal} and Theorem \ref{thm:HEG}]
The group $\hF$ is equal to the internal product of its subgroups
$\HEG$ and $\Ker_\ZZ(\hF)$, while the internal product of
$\HEG$ and $\Ker_{B(1,n)}(\hF)$ is a proper subgroup of $\hF$.\end{thmm}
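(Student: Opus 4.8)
The plan is to realize both statements as instances of the path-connectivity criterion of Corollary \ref{cor:expansion 0-conn}, applied to the standard polyhedral expansion of the Hawaiian earring $\mathbb{H}$ by wedges $X_i = \bigvee_{k\le i} S^1$, so that $\pi_1(X_i) = F_i$ is free of rank $i$, the bonding maps $F_{i+1}\to F_i$ kill the last generator, and $\hF = \varprojlim F_i$. For a fixed group $A$ I would take the coverings $\wX_i \to X_i$ corresponding to $H_i := \Ker_A(F_i)$; thus $H_i = [F_i,F_i]$ for $A = \ZZ$ and $H_i = F_i''$ (the second derived subgroup) for $A = B(1,n)$, as recorded in the excerpt via \cite{Conner-Kent-Herfort-Pavesic}. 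Since the bonding maps are surjective, they carry $[F_{i+1},F_{i+1}]$ onto $[F_i,F_i]$ and $F_{i+1}''$ onto $F_i''$, so in both cases $\{H_i\}$ has surjective bonds, hence $\varprojlim^1 H_i = 0$ and the sequence $0\to\varprojlim H_i\to\hF\to\varprojlim(F_i/H_i)\to 0$ is exact. Feeding this into Corollary \ref{cor:expansion 0-conn} shows that the internal product identity $\hF = \HEG\cdot\varprojlim H_i$ holds if and only if the natural map $\HEG\to\varprojlim(F_i/H_i)$ is surjective. Finally, the ``coordinate'' homomorphisms $\hF\to F_i\to A$ show in general that $\Ker_A(\hF)\subseteq\varprojlim\Ker_A(F_i)$, which will let me pass between $\varprojlim H_i$ and $\Ker_A(\hF)$.

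For the first assertion I take $A=\ZZ$, so $F_i/H_i = \ZZ^i$ and $\varprojlim(F_i/H_i)=\prod_{k\ge1}\ZZ$, the map $\HEG\to\prod_k\ZZ$ being the total winding-number homomorphism. This map is surjective, since for any prescribed sequence of winding numbers one writes down a Hawaiian loop realizing them, so by the previous paragraph $\hF=\HEG\cdot\varprojlim[F_i,F_i]$. It remains to identify $\varprojlim[F_i,F_i]$ with $\Ker_\ZZ(\hF)$: the inclusion $\Ker_\ZZ(\hF)\subseteq\varprojlim[F_i,F_i]$ is the coordinate inclusion above, and for the reverse I would invoke slenderness of $\ZZ$ to see that every homomorphism $\hF\to\ZZ$ factors through a single projection $\hF\to F_i$ and hence kills $\varprojlim[F_i,F_i]$; this gives $\varprojlim[F_i,F_i]\subseteq\Ker_\ZZ(\hF)$ and therefore $\hF=\HEG\cdot\Ker_\ZZ(\hF)$.

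For the second assertion I take $A=B(1,n)$, so $H_i = F_i''$ and $M_i := F_i/F_i''$ is the free metabelian group of rank $i$. Here $\hF$ itself does surject onto $\varprojlim M_i$ (surjective bonds), so the obstruction must be located precisely in the gap between $\HEG$ and $\hF$; my goal is to show $\HEG\to\varprojlim M_i$ is not surjective, which together with $\Ker_{B(1,n)}(\hF)\subseteq\varprojlim F_i''$ yields $\HEG\cdot\Ker_{B(1,n)}(\hF)\subseteq\HEG\cdot\varprojlim F_i''\subsetneq\hF$. To produce a non-realizable element I set $g_i = \prod_{j=2}^{i}[x_1,x_j]\in M_i$; killing the top generator sends $[x_1,x_{i+1}]$ to $1$, so $(g_i)_i$ is a coherent element of $\varprojlim M_i$. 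I would detect it by the Fox derivative $\partial_1$, which via the Magnus embedding is a well-defined invariant $M_i\to\ZZ[\ZZ^i]$: a direct computation gives $\partial_1 g_i = \sum_{j=2}^i(1-x_j)$, whose coefficient of the identity equals $i-1$ and hence is unbounded in $i$. On the other hand, every element $w\in\HEG$ uses the fixed generator $x_1$ only finitely many times --- this is the basic structural feature of reduced Hawaiian words --- say $N_1(w)$ times, so each $\partial_1 w_i$ is a sum of at most $N_1(w)$ signed monomials and all its coefficients are bounded by $N_1(w)$ independently of $i$. Thus no $w$ can have $\partial_1 w_i = \partial_1 g_i$ for all $i$, so $(g_i)\notin\im(\HEG)$ and the product is proper.

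The routine inputs are the exactness arising from surjective bonds, the coordinate inclusion $\Ker_A(\hF)\subseteq\varprojlim\Ker_A(F_i)$, and the surjectivity of the winding-number map. The two places I expect real work are the slenderness step in the first assertion --- establishing that every homomorphism $\hF\to\ZZ$ factors through a finite stage, so that $\Ker_\ZZ(\hF)=\varprojlim[F_i,F_i]$ --- and, more essentially, the non-surjectivity invariant in the second assertion. The latter is the true heart of the theorem, since it must exploit the finite-occurrence property of Hawaiian words to bound the Fox derivative $\partial_1$ uniformly in $i$, thereby separating the ``metabelian'' reach of $\HEG$ from that of $\hF$; this is exactly what makes $B(1,n)$ behave differently from $\ZZ$.
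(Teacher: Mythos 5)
Your proposal is correct, and its algebraic skeleton --- surjective bonds give $\varprojlim^1$ of the image groups trivial and the exact sequence $1\to\varprojlim H_i\to\hF\to\varprojlim(F_i/H_i)\to 1$, so the internal-product identity is equivalent to surjectivity of $\HEG\to\varprojlim(F_i/H_i)$, with $\Ker_A(\hF)$ compared to $\varprojlim\Ker_A(F_i)$ by coordinate projections and, for equality, by slenderness --- is the same reduction the paper performs with Lemma \ref{lem:alg lim1} and Lemma \ref{lem:lim slender}. The difference lies in how the two decisive conditions are verified, and for the second theorem the difference is substantial. For $A=\ZZ$ you prove surjectivity of $\HEG\to\ZZ^\NN$ directly by an infinite concatenation of winding loops, whereas the paper instead proves path-connectedness of the associated limit space (the integral grid in $\RR^\NN$); the two verifications are interchangeable via Corollary \ref{cor:expansion 0-conn}, and yours is the same device the paper uses in its squaring-covers example, so here the routes essentially coincide. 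For $A=B(1,n)$ the paper argues geometrically: it builds covers $\wY_n\to\wX_n$ with $\pi_1(\wY_n)=F_n''$, manufactures a path-disconnected inverse limit $\widetilde Z$ from a strictly decreasing sequence of $\ZZ$-covers of $\wX_2$ (disconnectedness coming from Corollary \ref{cor:cov over CW}, since $\pi_1(\wX_2)=F_2'$ is a countable group), and transfers disconnectedness to $\wY$ through a surjection $\wY\to\widetilde Z$; no element outside $\HEG\cdot\Ker_{B(1,n)}(\hF)$ is ever exhibited. You instead produce the explicit coherent element $(g_i)=\bigl(\prod_{j=2}^{i}[x_1,x_j]\bigr)$ of $\varprojlim(F_i/F_i'')$ and detect it by Fox calculus: $\partial_1 g_i=\sum_{j=2}^{i}(1-x_j)$ has identity coefficient $i-1$, while for $w\in\HEG$ every coefficient of $\partial_1 w_i$ is bounded, uniformly in $i$, by the finite number $N_1(w)$ of $x_1^{\pm1}$-occurrences in $w$. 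This is correct and buys something the paper's proof does not: an explicit witness (essentially $\prod_{j\ge 2}[x_1,x_j]$) and a proof requiring only the trivial inclusion $\Ker_{B(1,n)}(\hF)\subseteq\varprojlim F_i''$ rather than equality, at the cost of importing the word-theoretic structure of $\HEG$ (finite letter multiplicities, due to Eda and Cannon--Conner) and the Magnus embedding, whereas the paper stays entirely inside its covering-space framework. Two steps you flag as ``real work'' do need care: factoring homomorphisms $\hF\to\ZZ$ through some $F_n$ is not classical slenderness of $\ZZ$ but the nc-slender factorization result for $\hF$ (Lemma \ref{lem:lim slender}, via \cite{homlimpreprint}), and the well-definedness of $\partial_1$ on $F_i/F_i''$ is precisely the statement that the Magnus embedding has kernel $F_i''$, which should be cited explicitly.
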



\section{Preliminaries on inverse limits of spaces and groups}
\label{sec:Preliminaries on inverse limits of spaces and groups}

The main object of our study are lifting spaces that arise as inverse limits of covering spaces,
so let us consider the following sequence of regular covering maps
$p_i\colon \wX_i\to X$, together with the inverse limit map $p\colon \wX\to X$:
$$\xymatrix{
\wX_1 \ar[d]_{p_1} & \wX_2 \ar[d]_{p_2} \ar[l]_{\wf_{1}} & \wX_3 \ar[d]_{p_3} \ar[l]_{\wf_{2}} & \cdots\cdot\ar[l] & \wX \ar[l]\ar[d]^p\\
X \ar@{=}[r] & X \ar@{=}[r] & X \ar@{=}[r] & \cdots\cdots\ar@{=}[r] & X}$$
For each $i$ we identify $\pi_1(\wX_i)$ with its image $p_{i*}(\pi_1(\wX_i))$ which is a normal
subgroup of $\pi_1(X)$. By standard covering space theory the
bonding maps $\wf_{i}: \wX_{i+1}\to \wX_i$ are covering maps and we have
a decreasing sequence of normal subgroups of $\pi_1(X)$
$$\pi_1(X) \unrhd \pi_1(\wX_1) \unrhd \pi_1(\wX_2) \unrhd\cdots $$

Note that the converse is not true for general spaces, because a subgroup of $\pi_1(X)$ may not be a covering
subgroup, i.e. it does not necessarily come from a covering of $X$. However, if $X$ is locally path-connected, then
by \cite[Theorem II, 5.12]{Spanier} every decreasing sequence of normal covering subgroups of $\pi_1(X)$ uniquely
determines an inverse sequence of regular coverings over $X$.

Let $F(p)$ denote the fibre of the lifting projection $p$. Then $F(p)$  is the inverse limit of the fibres
$F(p_i)$, and so by \cite[Theorem II, 6.2]{Spanier} it can be naturally identified with the inverse limit of
the quotients
$$F(p)=\varprojlim (\pi_1(X)/\pi_1(\wX_i)).$$

Since all bonding maps in the inverse sequence are fibrations, it is well-known (see \cite{Cohen70, Hirschorn15})
that the homotopy groups of $\pi_1(\wX)$
can be expressed in terms of the homotopy groups of $\wX_i$:
$$\pi_1(\wX)=\varprojlim \pi_1(\wX_i)=\bigcap_i\pi_1(\wX_i),\ \ \ \ \pi_0(\wX)=\varprojlim{^1} \pi_1(\wX_i)$$
and $\pi_n(\wX)=\pi_n(X)$ for $n>1$. Here $\varprojlim$ denotes the inverse limit functor on groups
and $\varprojlim^1$ is its first derived functor
(see \cite[Section 11.3]{Geoghegan}).
For commutative groups these functors can be defined as kernel and cokernel of the homomorphism
$\varphi\colon \prod \pi_1(\wX_i)\to \prod \pi_1(\wX_i)$, given as
$$\varphi(u_1,u_2,u_3,\ldots)=(u_1-\wf_{1*}(u_2),u_2-\wf_{2*}(u_3),\ldots)$$
and so they fit in the exact sequence
$$0\to  \varprojlim \pi_1(\wX_i) \to \prod \pi_1(\wX_i) \stackrel{\varphi}{\longrightarrow}
\prod \pi_1(\wX_i) \to \varprojlim{^1}  \pi_1(\wX_i)\to 0.$$
The non-commutative case is more delicate: in that case $\varprojlim{^1} \pi_1(\wX_i)$
is defined as the quotient of $\prod \pi_1(\wX_i)$ under the equivalence relation given as
$$(u_i)\sim (v_i) \Leftrightarrow (v_i)=(x_i u_i\wf_{i*}(x_{i+1})^{-1})
\text{ for some } (x_i)\in \prod \pi_1(\wX_i)$$

(see \cite{Geoghegan1980} for detailed treatment of the non-commutative
case).

The values of $\varprojlim^1$ are notoriously hard to compute. Here we will be only interested
whether $\varprojlim^1$ of a sequence is trivial, and this can be settled if we can show
that the inverse sequence satisfies the \emph{Mittag-Leffler condition}, which we now define.
In an inverse sequence of groups
$$G_1\longleftarrow G_2 \longleftarrow G_3\longleftarrow\cdots$$
for a fixed $j$ the image of the homomorphism $G_i\to G_j$ decreases as $i$ goes toward infinity. The inverse
sequence is said to satisfy the \emph{Mittag-Leffler condition} if for every $j$ the sequence
$\{\im(G_i\to G_j)\mid i=j,j+1,\ldots\}$ stabilizes. Clearly, sequences with epimorphic bonding maps satisfy
the Mittag-Leffler condition. On the other hand, if the bonding maps
are monomorphisms, then the sequence satisfies he Mittag-Leffler condition if, and only if, it is eventually
constant. The following result is proved in \cite{Geoghegan1980} (see also \cite[Theorem 11.3.2]{Geoghegan}):

\begin{proposition}
\label{prop:lim1 and ML}
If an inverse sequence $\{G_i\}$, of groups, satisfies the Mittag-Leffler
condition, then $\varprojlim^1 G_i=\{1\}$. Conversely, if
$\varprojlim^1 G_i=\{1\}$ and each $G_i$ is countable, then
$\{G_i\}$ satisfies the Mittag-Leffler condition.
\end{proposition}

As we explained before, the sequence of fundamental groups induced by an inverse sequence of coverings
satisfies Mittag-Leffler condition only if it is constant from some point on. Thus we get immediately
the following corollary.

\begin{corollary}
\label{cor:cov over CW}
Let $\wX$ be the inverse limit of an inverse sequence of covering maps $p_i\colon \wX_i\to X$ and assume that
for some $i$ the group $\pi_1(\wX_i)$ is countable.
Then either the sequence is eventually constant (which implies that $p\colon\wX\to X$ is a covering map),
or $\wX$ is not path-connected.

In particular an inverse limit of coverings over a countable CW-complex is either a covering or its
total space is not path-connected.
\end{corollary}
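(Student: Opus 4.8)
The plan is to apply Proposition~\ref{prop:lim1 and ML} to the sequence of fundamental groups $\{\pi_1(\wX_i)\}$. Recall from the discussion preceding the corollary that an inverse sequence of coverings $p_i\colon\wX_i\to X$ produces a \emph{decreasing} sequence of normal subgroups $\pi_1(\wX_1)\unrhd\pi_1(\wX_2)\unrhd\cdots$ of $\pi_1(X)$, with the bonding maps $\wf_{i*}$ being \emph{inclusions}. The key observation, already noted in the text, is that for a sequence whose bonding maps are monomorphisms, the Mittag-Leffler condition holds if, and only if, the sequence is eventually constant.

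First I would verify the hypotheses of the converse half of Proposition~\ref{prop:lim1 and ML}. We are told that $\pi_1(\wX_{i_0})$ is countable for some $i_0$; since all later groups $\pi_1(\wX_i)$ for $i\geq i_0$ are subgroups of $\pi_1(\wX_{i_0})$, they are countable as well. Thus the \emph{tail} sequence $\{\pi_1(\wX_i)\mid i\geq i_0\}$ consists of countable groups, and since passing to a cofinal tail changes neither $\varprojlim^1$ nor the Mittag-Leffler property, we may assume without loss of generality that \emph{every} $\pi_1(\wX_i)$ is countable.

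Next I would argue by dichotomy on whether the sequence is eventually constant. If it is, then the inverse limit is (up to the tail) a single covering space $\wX_{i_0}\to X$, so $p\colon\wX\to X$ is a covering map and $\wX$ is path-connected; this is the first alternative. If the sequence is \emph{not} eventually constant, then because the bonding maps are monomorphisms the Mittag-Leffler condition fails, as recorded above. Now I invoke the contrapositive of the converse direction of Proposition~\ref{prop:lim1 and ML}: for a sequence of countable groups, failure of Mittag-Leffler forces $\varprojlim^1\pi_1(\wX_i)\neq\{1\}$. Since $\pi_0(\wX)=\varprojlim^1\pi_1(\wX_i)$ from the identities established earlier in the section, a nontrivial $\varprojlim^1$ means $\wX$ has more than one path-component, i.e.\ $\wX$ is not path-connected, giving the second alternative.

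I do not anticipate a serious obstacle here, as the result is essentially an assembly of facts already in place; the one point requiring care is the reduction to a countable tail. The ``In particular'' clause then follows at once: for a countable CW-complex $X$, the group $\pi_1(X)$ is countable, hence so is each $\pi_1(\wX_i)$, placing us squarely in the hypothesis of the main statement with $i=1$.
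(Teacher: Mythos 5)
Your proof is correct and follows essentially the same route as the paper: identify the bonding maps as inclusions of a decreasing sequence of subgroups, note that such a sequence is Mittag-Leffler only if eventually constant, reduce to a countable tail (subgroups of a countable group being countable), and apply the converse direction of Proposition~\ref{prop:lim1 and ML} together with the identification $\pi_0(\wX)=\varprojlim^1\pi_1(\wX_i)$. The paper treats this as immediate from the preceding discussion; you have merely made the same steps explicit, including the harmless extra observation that the eventually-constant case yields a path-connected covering.
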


This last statement is not surprising if one considers the tail of the exact homotopy sequence of the
fibration $p\colon \wX\to X$, shown in the following diagram together with the above mentioned identifications:
$$\xymatrix@C=1.5pc{
1\ar[r] & \pi_1(\wX) \ar[r]^{p_*}\ar@{=}[d] & \pi_1(X) \ar[r]^\partial\ar@{=}[d] & \pi_0(F(p)) \ar[r]\ar@{=}[d]
& \pi_0(\wX) \ar[r]\ar@{=}[d] & {*} \\
1\ar[r] & \varprojlim\pi_1(\wX_i) \ar[r] & \pi_1(X) \ar[r] & \varprojlim (\pi_1(X)/\pi_1(\wX_i)) \ar[r] &
\lim^1\pi_1(\wX_i) \ar[r] & {*} }
$$
If the sequence is not eventually constant, then $\pi_0(F(p))$ is uncountable.
But if $X$ is a countable CW-complex, then $\pi_1(X)$ is a countable group and so $\partial$
cannot be surjective.

However, if $\pi_1(X)$ is uncountable, then $\partial$ can be surjective. For example, the lifting projection
over the infinite product of circles, obtained by 'unwrapping' one circle at a time, as in
$$
(S^1)^\infty \leftarrow  (\RR\times S^1\times S^1\times\ldots) \leftarrow (\RR\times \RR\times S^1\times\ldots)
\leftarrow \cdots\cdots \leftarrow \RR^\infty
$$
has a path-connected total space in spite of the fact that it is not eventually constant. Another, less
obvious example is a sequence of 2-fold coverings over the Hawaiian earring whose limit space is path-connected
(see \cite[Section 2.7]{Conner-Herfort-Pavesic}). In both cases we have decreasing sequences of
uncountable fundamental groups that do not satisfy the Mittag-Leffler condition
but their derived inverse limits are nonetheless trivial.

Of course, one can easily find examples of inverse limits of coverings of the Hawaiian ring whose total space
has uncountably many path-components. Distinguishing between different cases is hindered by the difficulties
in the explicit computation of the derived limit functor for decreasing sequences of uncountable groups.
We present one approach to this problem in the following section, while in the last section we
consider the algebraic implications of the path-(dis)connectedness of the limit space.

\section{Inverse limits of coverings over an expansion of $X$}\label{sec: inverse limits of coverings}

Our approach to reducing the complexity of computing the derived limit functor to the more manageable countable case is to consider a polyhedral expansion $\{X_i\}$ of $X$ and represent a lifting projection $p\colon\wX \to X$ as an inverse limit of covering maps over the polyhedral expansion, i.e., find covering maps $p_i\colon\wX_i\to X_i$ that satisfy the following diagram.

$$\xymatrix{
\wX_1 \ar[d]_{p_1} & \wX_2 \ar[d]_{p_2} \ar[l]_{\wf_{1}} & \wX_3
\ar[d]_{p_3} \ar[l]_{\wf_{2}} & \cdots\ar[l] & \wX \ar[d]^p\ar[l]\\
X_1  & X_2 \ar[l]^{f_{1}} & X_3 \ar[l]^{f_{1}} & \cdots\ar[l] & X\ar[l]}$$

There are several standard methods by which a compact metric space can be represented as a limit of an inverse sequence of polyhedra (see Mardesic-Segal \cite{Mardesic-Segal}). The one that best  suits our purposes is by nerves of coverings, which we briefly recall (see \cite[Appendix 1]{Mardesic-Segal} for details). Every metric compactum $X$ admits arbitrarily
fine finite open coverings. Moreover, for every finite open covering $\UU=\{U_1,\ldots,U_n\}$ of $X$ there exists a subordinated partition of unity $\{\rho_i\colon X\to [0,1]\mid i=1,\ldots,n\}$. Let $N(\UU)$ be the \emph{nerve} of the covering $\UU$, i.e., the simplicial complex whose vertices are elements of $\UU$, and whose simplices are spanned by elements of $\UU$ with non-empty intersection. Then the formula
$$f(x):=\sum_{i=1}^n \rho_i(x)\cdot U_i$$
defines a map $f\colon X\to |N(\UU)|$ (where $|N(\UU)|$ is the geometric realization of the nerve). It is well known that the choice of the partition of unity does not affect the homotopy class of $f$, so the induced homomorphism $f_*\colon \pi_1(X)\to\pi_1\bigl(|N(\UU)|\bigr)$ depends only on the cover $\UU$.

\begin{lemma}
\label{lem:epi mf}
If each element of $\UU$ is path-connected, then $f_*\colon \pi_1(X)\to\pi_1\bigl(|N(\UU)|\bigr)$ is surjective.
\end{lemma}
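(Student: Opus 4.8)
The plan is to reduce the statement to realizing the standard generators of $\pi_1\bigl(|N(\UU)|\bigr)$ by loops in $X$, and the crucial geometric input will be a containment lemma relating each $U_i$ to the closed star of its vertex. Write $v_i$ for the vertex of $N(\UU)$ corresponding to $U_i$ and let $\overline{\mathrm{St}}(v_i)$ denote the closed star of $v_i$, i.e.\ the union of the closed simplices that contain $v_i$. I would first prove the key claim that $f(U_i)\subseteq\overline{\mathrm{St}}(v_i)$. Indeed, for $x\in U_i$ the barycentric coordinates of $f(x)$ are the numbers $\rho_k(x)$, and the vertices $v_k$ with $\rho_k(x)>0$ together with $v_i$ all have a common point, namely $x$ (since $\rho_k(x)>0$ forces $x\in U_k$, and $x\in U_i$ by hypothesis); hence these vertices span a simplex of $N(\UU)$ that contains $v_i$, so $f(x)$ lies in a closed simplex through $v_i$. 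Since $\overline{\mathrm{St}}(v_i)$ is a cone with apex $v_i$, it is contractible, and in fact the straight-line homotopy pushing every point toward $v_i$ stays inside $|N(\UU)|$ and is continuous when read off in barycentric coordinates.

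Next I would recall that $\pi_1\bigl(|N(\UU)|\bigr)$ is carried by its edge-path group, so it suffices to represent an arbitrary edge loop $v_{i_0},v_{i_1},\dots,v_{i_m}=v_{i_0}$ by the $f$-image of a loop in $X$. Each edge $\{v_{i_{j-1}},v_{i_j}\}$ records that $U_{i_{j-1}}\cap U_{i_j}\neq\emptyset$, so I would choose a point $c_j\in U_{i_{j-1}}\cap U_{i_j}$. Both $c_j$ and $c_{j+1}$ then lie in $U_{i_j}$, and here is where path-connectedness of the cover elements enters: I would pick a path $\alpha_j$ inside $U_{i_j}$ from $c_j$ to $c_{j+1}$ and concatenate the $\alpha_j$ cyclically into a loop $\gamma$ in $X$.

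It remains to identify $f_*[\gamma]$ with the class of the edge loop. By the key claim, $f\circ\alpha_j$ lies entirely in the contractible cone $\overline{\mathrm{St}}(v_{i_j})$, so deforming through this cone shows that $f\circ\alpha_j$ is homotopic rel endpoints to the two-segment path $f(c_j)\to v_{i_j}\to f(c_{j+1})$. Concatenating, $f\circ\gamma$ becomes a product of segments $v_{i_{j-1}}\to f(c_j)\to v_{i_j}$; since $v_{i_{j-1}}$, $f(c_j)$ and $v_{i_j}$ all lie in one simplex (again by the key claim applied at $c_j\in U_{i_{j-1}}\cap U_{i_j}$), each such piece is homotopic rel endpoints to the straight edge $v_{i_{j-1}}\to v_{i_j}$, and the product collapses to the original edge loop. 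Finally I would attend to basepoints: connecting $f(x_0)$ to the starting vertex inside a simplex, and joining $x_0$ to $c_1$ by a path in $X$ (available since $X$ is path-connected), turns this into the required statement that the prescribed class lies in $\mathrm{im}\,f_*$ at the fixed basepoint.

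The step I expect to be the main obstacle is the key claim $f(U_i)\subseteq\overline{\mathrm{St}}(v_i)$ together with the continuity of the cone deformation: the naive hope that $f$ maps $U_i$ into the \emph{open} star of $v_i$ is false, because $\rho_i$ may vanish on part of $U_i$, and the whole argument hinges on recognizing that the images nevertheless share the vertex $v_i$ within a common simplex. The remaining work---reduction to edge loops and the basepoint bookkeeping---is routine once this geometric fact is in hand.
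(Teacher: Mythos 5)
Your proposal is correct in substance, but it takes a genuinely different (and in one respect more careful) route than the paper. The paper normalizes the partition of unity to be \emph{reduced}, so that each $U_i$ contains a peak point $x_i$ with $f(x_i)=v_i$, chooses for each edge a path from $x_i$ to $x_j$ inside $U_i\cup U_j$, assembles these into a global map $g\colon |N(\UU)^{(1)}|\to X$ on the $1$-skeleton, asserts that $f(g(\sigma))$ lies in the open star of each $1$-simplex $\sigma$, deduces $f\circ g\simeq i$, and concludes from surjectivity of $\pi_1(|N(\UU)^{(1)}|)\to\pi_1(|N(\UU)|)$. You never build a global map: you realize each edge loop separately, routing through points $c_j$ of the pairwise intersections and connecting paths that stay inside a \emph{single} element $U_{i_j}$, and you collapse the image loop using contractibility of closed stars and of common simplices. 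The underlying geometry is the same, but your closed-star containment $f(U_i)\subseteq\overline{\mathrm{St}}(v_i)$ is exactly the statement that is true, whereas the paper's open-star assertion fails as literally stated (already at the peak points, since $f(x_i)=v_i$ does not lie in the open star of the edge, and, as you observe, $\rho_i$ may vanish on part of $U_i$). Moreover, keeping each connecting path inside one element is what makes the homotopy step airtight: the paper's paths live in $U_i\cup U_j$, whose $f$-image is only contained in $\overline{\mathrm{St}}(v_i)\cup\overline{\mathrm{St}}(v_j)$, and a union of two closed stars need not be simply connected; repairing the paper's sketch amounts to forcing a single crossing through $U_i\cap U_j$, which is precisely what your construction builds in. What the paper's packaging buys is brevity (one map, one homotopy); what yours buys is that every individual step can be verified.

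One step does need tightening: the basepoint bookkeeping, as written, only shows that a \emph{conjugate} of the prescribed class lies in $\im f_*$. Joining $x_0$ to $c_1$ by an arbitrary path $\beta$ in $X$ transports the edge-loop class by a path that depends on the loop, and a proper subgroup can contain a conjugate of every member of a generating set (e.g.\ $\langle(12)\rangle\le S_3$ with the generating set $\{(12),(13)\}$), so this alone does not give surjectivity. The fix is immediate and uses only your hypotheses: base the edge loops at a vertex $v_{i_0}$ with $x_0\in U_{i_0}$, and choose $\beta$ inside $U_{i_0}$, which is possible since $c_1\in U_{i_0}\cap U_{i_1}$ and $U_{i_0}$ is path-connected. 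Then $f\circ\beta$, the segment from $f(c_1)$ to $v_{i_0}$, and the fixed segment from $f(x_0)$ to $v_{i_0}$ all lie in the contractible closed star $\overline{\mathrm{St}}(v_{i_0})$, so the conjugating path is homotopic rel endpoints to one and the same path for every edge loop; hence $\im f_*$ contains the image of the full change-of-basepoint isomorphism $\pi_1\bigl(|N(\UU)|,v_{i_0}\bigr)\to\pi_1\bigl(|N(\UU)|,f(x_0)\bigr)$, which is the desired surjectivity.
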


\begin{proof}
Without loss of generality we may assume that the partition of unity
subordinated to $\UU$ is reduced in the sense that for every $i$
there exists $x_i\in U_i$, such that $\rho_i(x_i)=1$.
For every pair of intersecting sets $U_i, U_j\in\UU$ we
may choose a path in $U_i\cup V_j$ between $x_i$ and $x_j$.
These paths determine a map $g\colon |N(\UU)^{(1)}|\to X$ from
the 1-skeleton of the nerve to $X$. One can check that, for every 1-simplex $\sigma$ in $N(\UU)^{(1)}$, the image
$f(g(\sigma))$
is contained in the open star of $\sigma$ in $N(\UU)$, which implies
that $f\circ g$ is homotopic to the inclusion
$i\colon|N(\UU)^{(1)}| \hookrightarrow |N(\UU)|$. Since
$i_*\colon\pi_1(|N(\UU)^{(1)}|)\to\pi_1(|N(\UU)|)$ is surjective,
 $f_*$ is also surjective.
\end{proof}

If $\UU'$ is a covering of $X$ that refines $\UU$ (i.e. every element
of $\UU'$ is contained in some element of $\UU$), then there is an
obvious simplicial map $N(\UU')\to N(\UU)$. By iterating the
refinements we obtain an inverse system of polyhedra
$$|N(\UU_1))| \longleftarrow |N(\UU_2))| \longleftarrow |N(\UU_3))|
\longleftarrow\ldots$$
whose limit is $X$. Moreover, if $X$ is locally path-connected, then
we may choose covers of $X$ whose elements are path-connected. As an
interesting aside, Lemma \ref{lem:epi mf} immediately implies the following
well-known fact, which also follows immediately from the Hahn-Mazurkiewicz theorem and results of Krasinkiewicz \cite[Theorems 4.1 and
4.2]{Krasinkiewicz}.

\begin{corollary}
Every Peano continuum $X$ can be represented as the limit of
an inverse system of finite polyhedra
$$\xymatrix{
X_1 & X_2 \ar[l] & X_3 \ar[l] & \cdots \ar[l] &\ar[l] \varprojlim X_i= X
}$$
such that the homomorphisms
$\pi_1(X)\to\pi_1(X_i)$  induced by the projection maps are surjective.
\end{corollary}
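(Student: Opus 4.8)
The plan is to derive this corollary directly from the machinery just established, using the nerve construction and Lemma~\ref{lem:epi mf} as the engine. First I would invoke the nerve construction to represent the Peano continuum $X$ as an inverse limit of finite polyhedra: since $X$ is a compact metric space, it admits arbitrarily fine finite open coverings, and by taking a sequence of successively refining covers $\UU_1,\UU_2,\ldots$ I obtain an inverse system of nerves $|N(\UU_i)|$ whose limit is homeomorphic to $X$. The key point here, which the text has already flagged, is that because $X$ is a Peano continuum it is locally path-connected, so I may choose each cover $\UU_i$ so that every one of its elements is path-connected; this is the hypothesis required to apply the surjectivity lemma.

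The main step is then simply an application of Lemma~\ref{lem:epi mf} to each cover. Setting $X_i := |N(\UU_i)|$ and letting $f_i\colon X\to X_i$ be the canonical map defined by a subordinated partition of unity, the lemma tells us that each induced homomorphism $\pi_1(X)\to\pi_1(X_i)$ is surjective, precisely because each element of $\UU_i$ is path-connected. The projection maps in the statement of the corollary are exactly these $f_i$ (up to the identification of $X$ with the inverse limit), so the surjectivity conclusion is immediate.

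I would close by noting the compatibility with the refinement maps: since $\UU_{i+1}$ refines $\UU_i$, the simplicial maps $N(\UU_{i+1})\to N(\UU_i)$ assemble the polyhedra into a genuine inverse system, and the maps $f_i$ commute with the bonding maps up to homotopy, so that $X$ is indeed the inverse limit $\varprojlim X_i$ together with compatible projections inducing surjections on $\pi_1$.

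The only genuine subtlety—and the step I would watch most carefully—is the verification that the inverse limit of the nerves is actually homeomorphic to $X$ rather than merely shape-equivalent; this is a standard fact for fine covers of a compact metric space (see \cite{Mardesic-Segal}), but it is the one place where compactness and the metric structure are doing real work. Everything else reduces to bookkeeping: the surjectivity on fundamental groups is handed to us by Lemma~\ref{lem:epi mf}, and local path-connectedness of the Peano continuum is exactly what licenses the choice of path-connected covers needed to apply it.
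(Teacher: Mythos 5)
Your proposal is correct and follows essentially the same route as the paper: the paper likewise obtains the corollary immediately from Lemma \ref{lem:epi mf} by taking an inverse system of nerves of successively refining finite covers with path-connected elements (available since a Peano continuum is compact, metric and locally path-connected), with the limit being $X$ by the standard Marde\v{s}i\'{c}--Segal result. Your closing remarks on homotopy-commutativity of the projections and on the limit being homeomorphic (not merely shape-equivalent) to $X$ are exactly the points the paper delegates to \cite{Mardesic-Segal}, so nothing is missing.
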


We do not know whether every inverse limit of coverings over a
path-connected and locally path-connected
base $X$ can be derived from an inverse limit of coverings over
some expansion of $X$. However, if we assume that $X$ is compact,
we may use the approximation by nerves of coverings to prove the following theorem.

\begin{lemma}\label{lem:kernel}Let $X$ be a Peano continuum and $g: X \to Y$ be a continuous map into a polyhedron $Y$.  If $\mathcal U$ is a cover of $X$ by path connected open sets such that the preimage of the open star of any vertex of $Y$ is contained in an element of $\mathcal U$, then $\Ker(g_*)\subset \pi_1\bigl( X; 2\,\mathcal U\bigr)$.
\end{lemma}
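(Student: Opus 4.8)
The plan is to argue directly: start from a loop $\gamma$ representing a class in $\Ker(g_*)$, so that $g\circ\gamma$ is null-homotopic in $Y$, and exhibit $[\gamma]$ as a product of conjugates of loops each confined to a single element of $2\,\UU$. First I would pull the simplicial structure of $Y$ back to $X$ by forming the open cover $\mathcal V=\{g^{-1}(\mathrm{st}(v))\mid v\in Y^{(0)}\}$. The hypothesis says precisely that $\mathcal V$ refines $\UU$, so every loop confined to an element of $\mathcal V$ (or to a union of two meeting elements of $\mathcal V$) is already confined to an element of $\UU$ (resp.\ of $2\,\UU$); it therefore suffices to work with $\mathcal V$. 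Using the barycentric coordinates of $Y$ as a partition of unity subordinated to $\mathcal V$, the canonical map to the nerve can be arranged so that $g=\iota\circ f_{\mathcal V}$, where $f_{\mathcal V}\colon X\to|N(\mathcal V)|$ is canonical and $\iota\colon|N(\mathcal V)|\hookrightarrow Y$ is the inclusion of a subcomplex (a simplex of $N(\mathcal V)$ is a collection of vertices with nonempty intersection of star-preimages, hence a genuine simplex of $Y$).

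Next I would discretise $\gamma$. By the Lebesgue number lemma applied to the cover $\mathcal V$ of the compact set $\gamma(S^1)$, I can subdivide $\gamma=\gamma_0*\cdots*\gamma_{m-1}$ so that each arc $\gamma_k$ lies in some $g^{-1}(\mathrm{st}(v_k))\subseteq U_k\in\UU$; the breakpoints $x_k=\gamma(t_k)$ then lie in $U_{k-1}\cap U_k$, so consecutive elements meet and the vertices $v_0,\dots,v_{m-1}$ trace an edge loop $w$ in $N(\mathcal V)$ whose $\iota$-image is freely homotopic to $g\circ\gamma$. Since $g\circ\gamma$ is null-homotopic, $w$ bounds in $Y$, and by simplicial approximation there is a triangulated disk $K$ and a simplicial map $H\colon K\to Y$ restricting to $w$ on $\partial K$. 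I would then \emph{lift the filling $H$ back to $X$}: choose for every vertex $u$ of $K$ a point $P(u)\in g^{-1}(\mathrm{st}(H(u)))\subseteq U_{H(u)}$ (taking $P$ to recover the breakpoints $x_k$ on $\partial K$), connect $P$-values along each edge of $K$ by a path lying in a single element of $\UU$, and observe that the image of the boundary of each $2$-simplex of $K$ is then a loop contained in a single star $\mathrm{St}(U,\UU)\in 2\,\UU$ — because the three $\UU$-elements attached to its vertices pairwise meet, so all lie in the star of any one of them. As $K$ is simply connected, $\gamma$ is freely homotopic to a product of conjugates of these $2$-simplex-boundary loops, placing $[\gamma]$ in $\pi_1(X;2\,\UU)$.

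The step I expect to be the main obstacle is the construction of the connecting paths inside single elements of $\UU$, equivalently the verification that each $2$-simplex of $K$ is \emph{good}, in the sense that the three $\UU$-elements at its vertices pairwise intersect. An edge $\{u,u'\}$ of $K$ only guarantees that $\mathrm{st}(H(u))$ and $\mathrm{st}(H(u'))$ meet in $Y$, which does not by itself force $U_{H(u)}\cap U_{H(u')}\neq\varnothing$ in $X$; moreover an interior vertex may have empty star-preimage. The resolution is that the edges and triangles coming from the nerve $N(\mathcal V)$ are \emph{witnessed}: a simplex of $N(\mathcal V)$ records a nonempty intersection of star-preimages, hence a genuine point of $X$ lying in all the associated $\UU$-elements, so their pairwise and triple overlaps are nonempty and the triangle loops do land in a single star. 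This is exactly the point at which one is forced to pass from $\UU$ to its double $2\,\UU$: the triple overlaps are absorbed by the star enlargement rather than by $\UU$ itself. The genuine difficulty is therefore to arrange that the filling $H$ only ever crosses witnessed (good) simplices — either by taking the nullhomotopy inside the subcomplex of $Y$ spanned by simplices meeting $g(X)$ and refining $K$ barycentrically until the carrier of each simplex is witnessed, or by reproving the edge-path reduction of $w$ so that every elementary move (spur cancellation and triangle relation) is realised over a configuration already witnessed in $X$. Controlling these non-witnessed simplices of $Y$, and checking that the star enlargement $2\,\UU$ suffices to absorb them, is the crux of the argument.
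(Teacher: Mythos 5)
Your proposal is not yet a proof: it stalls exactly at the step you yourself flag as ``the crux,'' and that step is not a removable technicality --- it is the entire content of the statement. Concretely, nothing in the hypotheses forces a simplicial filling $H\colon K\to Y$ of the edge-loop $w$ to stay over \emph{witnessed} simplices, and no barycentric refinement of $K$ or re-organization of the edge-path reduction can force it to, because $g\circ\gamma$ may simply fail to be null-homotopic in the witnessed subcomplex $\iota\bigl(|N(\mathcal V)|\bigr)$. In fact the obstruction you identified is genuine in the strongest sense: with the hypotheses read literally, the conclusion can fail. Take $X=S^1$, let $Y$ be a finely triangulated $2$-disk, and let $g$ map $X$ homeomorphically onto the boundary circle. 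Every interior vertex of $Y$ has empty star-preimage, and the star-preimages of boundary vertices are short open arcs; hence $\mathcal U$ may be taken to be a cover of $S^1$ by short arcs, for which $\pi_1\bigl(X;2\,\mathcal U\bigr)$ is trivial, while $\Ker(g_*)=\pi_1(S^1)\cong\ZZ$ since $Y$ is simply connected. So both of your proposed repairs (pushing the null-homotopy into the witnessed subcomplex, or performing only witnessed elementary moves) are impossible in this generality: the lemma really lives in the situation of its intended application, where $Y=X_i$ is a nerve in a Marde\v si\'c--Segal expansion of $X$ and $g$ is the canonical map, so that \emph{every} simplex of $Y$ is witnessed by a point of $X$. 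Your plan needs that property as an explicit input, and you never secure it.

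The paper's proof goes in the opposite direction and thereby never meets this problem. Rather than pulling a filling back from $Y$ to $X$, it pushes the null-homotopy forward into the nerve of $\mathcal U$: quoting \cite[Lemma 3.3]{Conner-Pavesic}, the canonical map $f\colon X\to|N(\mathcal U)|$ satisfies $\Ker(f_*)=\pi_1\bigl(X;2\,\mathcal U\bigr)$; the star-preimage hypothesis is then used to produce a map $h\colon Y\to|N(\mathcal U)|$ (send a vertex $v$ to a chosen $U\supseteq g^{-1}(\mathrm{st}(v))$ and extend over simplices) with $f_*=h_*\circ g_*$, whence $\Ker(g_*)\subseteq\Ker(f_*)=\pi_1\bigl(X;2\,\mathcal U\bigr)$. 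All of the filling-and-lifting work in your second and third paragraphs is encapsulated in the cited lemma, where it succeeds because every simplex of $N(\mathcal U)$ is witnessed by definition of the nerve; the only place where the geometry of $Y$ enters is the construction of $h$, which is precisely where the witnessing of simplices of $Y$ (automatic for nerve approximations, false for a general $g$, as the disk example shows) is needed. A smaller but real slip in your write-up: $\mathrm{St}(U,\mathcal U)$ is not an element of $2\,\mathcal U$, which consists of unions of \emph{two} elements of $\mathcal U$, and pairwise intersections of the three sets at the corners of a triangle do not suffice --- you need the triple-intersection witness to split the triangle loop into conjugates of loops each lying in a union of two elements.
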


\begin{proof}
  If we denote by $f\colon X\to |\mathcal{U}|$ the map from $X$ to the realization of the nerve of $\mathcal{U}$, then the kernel of the homomorphism $f_*\colon\pi_1(X)\to\pi_1(|\mathcal{U}|)$ is $\pi_1(X;2\,\mathcal{U})$, see \cite[Lemma 3.3]{Conner-Pavesic}.  Since the preimage of the open star of any vertex of $Y$ is contained in an element of $\mathcal U$, there exists a map $h: Y \to |\mathcal{U}|$ such that $f_* = h_*\circ g_*$.  Thus $\Ker(g_*)\subset \ker(f_*) = \pi_1\bigl( X; 2\,\mathcal U\bigr)$.
\end{proof}

Let $\mathcal U$ be an open cover of $X$.  We will use $\pi_1(X; \mathcal U)$ to denote the $\mathcal U$-Spanier subgroup, i.e. $\pi_1(X; \mathcal U)$ is the subgroup of $\pi_1(X,x_0)$ generated by $$\bigl\{[\alpha * \beta *\overline \alpha] \mid \alpha: (I,0) \to (X, x_0) \text{ and } \im(\beta)\subset U \text{ for some } U\in \mathcal U \bigr\}.$$
\begin{theorem}
\label{thm:limit over PC}
If $X$ is a Peano continuum, then every inverse sequence of covering
projections over $X$ is homeomorphic to
a pull-back of an inverse sequence of covering maps over a given
polyhedral expansion of $X$.

As a consequence, every inverse limit of covering maps over $X$ is homeomorphic to an inverse limit
of covering maps over a given polyhedral expansion of $X$.
\end{theorem}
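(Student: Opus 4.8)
The plan is to phrase everything in terms of covering subgroups and then to construct a nerve expansion of $X$ fine enough to absorb the kernels of the approximating maps. Following the setup of Section \ref{sec:Preliminaries on inverse limits of spaces and groups}, I identify the given inverse sequence of coverings $\wX_i\to X$ with the decreasing sequence of (normal) subgroups $H_i:=\pi_1(\wX_i)$ of $\pi_1(X)$, so that $H_1\unrhd H_2\unrhd\cdots$. The first input is Spanier's characterization of covering subgroups \cite{Spanier}: since each $\wX_i\to X$ is a covering, there is an open cover $\mathcal W_i$ of $X$ with $\pi_1(X;\mathcal W_i)\subseteq H_i$.

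Next I would build the expansion. As $X$ is a Peano continuum it is compact and locally path-connected, so one can choose inductively finite covers $\UU_1,\UU_2,\ldots$ by path-connected open sets with $\UU_{j+1}$ refining $\UU_j$, mesh tending to $0$, and $2\,\UU_j$ refining $\mathcal W_j$; the last requirement is met by taking $\UU_j$ a sufficiently fine star-refinement, which exists by compactness and local path-connectedness. Setting $X_j:=|N(\UU_j)|$ with the simplicial maps induced by the refinements gives a polyhedral expansion $X=\varprojlim X_j$ (the Corollary following Lemma \ref{lem:epi mf}) whose nerve maps $q_j\colon X\to X_j$ satisfy $q_j=f_j\circ q_{j+1}$. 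By Lemma \ref{lem:epi mf} each $q_{j*}$ is surjective, and by the kernel identity established in the proof of Lemma \ref{lem:kernel} one has, for all $j\ge i$,
$$\Ker(q_{j*})=\pi_1(X;2\,\UU_j)\subseteq\pi_1(X;\mathcal W_j)\subseteq H_j\subseteq H_i,$$
where the first inclusion uses monotonicity of Spanier subgroups under refinement.

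With the expansion in hand I would manufacture the coverings over it. Put $K_j:=q_{j*}(H_j)\le\pi_1(X_j)$; since $X_j$ is a polyhedron it is semilocally simply connected, so $K_j$ is a covering subgroup and determines a covering $\hX_j\to X_j$ (regular whenever the $\wX_i\to X$ are, since $q_{j*}$ is onto). Because $\Ker(q_{j*})\subseteq H_j$ we get $q_{j*}^{-1}(K_j)=H_j$, so the pull-back $q_j^*\hX_j\to X$ is the covering with subgroup $H_j$; surjectivity of $q_{j*}$ makes this pull-back connected, hence equivalent to $\wX_j\to X$. Moreover $q_j=f_j\circ q_{j+1}$ yields $f_{j*}(K_{j+1})=q_{j*}(H_{j+1})\subseteq q_{j*}(H_j)=K_j$, which is precisely the lifting criterion guaranteeing a map $\hX_{j+1}\to\hX_j$ over $f_j$. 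This produces an inverse sequence of coverings $\{\hX_j\to X_j\}$ over the expansion whose term-wise pull-back along the $q_j$ recovers $\{\wX_j\to X\}$ together with its bonding maps, which is the first assertion.

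For the consequence I would pass to inverse limits. Since the base of every pull-back $q_j^*\hX_j$ is $X$ with identity bonding maps, a point of $\varprojlim_j q_j^*\hX_j$ consists of a thread $(y_j)\in\varprojlim_j\hX_j$ together with the single point of $X=\varprojlim_j X_j$ it determines; thus inverse limits commute with these pull-backs and $\wX=\varprojlim\wX_j\cong\varprojlim\hX_j$, an inverse limit of coverings over the expansion. I expect the crux to be the expansion step: arranging one cofinal refining sequence of path-connected covers whose nerve-kernels $\pi_1(X;2\,\UU_j)$ are simultaneously forced inside the prescribed subgroups $H_j$. This is exactly where Spanier's characterization, the nerve-kernel identity of Lemma \ref{lem:kernel}, and the existence of path-connected star-refinements in a Peano continuum must be combined; once the covers are fine enough, the covering-space bookkeeping and the limit-commutation argument are routine.
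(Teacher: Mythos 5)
Your argument is correct in substance and relies on the same toolkit as the paper --- Spanier covers for the given coverings, the nerve--kernel identity behind Lemma \ref{lem:kernel}, image subgroups over the polyhedra, and a pull-back verification --- but it executes the two key steps differently. Where you build a bespoke nerve expansion $X_j=|N(\UU_j)|$ whose covers are chosen so that $2\,\UU_j$ refines the Spanier cover $\mathcal W_j$ of the $j$-th covering, the paper instead fixes an \emph{arbitrary} polyhedral expansion $\{X_i\}$ with surjective induced homomorphisms and invokes Theorem 5 of Marde\v si\'c--Segal to find, for each covering $\wX_i$, an index $j(i)$ at which star-preimages are fine enough, so that $\Ker\big(\pi_1(X)\to\pi_1(X_{j(i)})\big)\subseteq\pi_1(\wX_i)$. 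The paper's route buys a formally stronger conclusion: the coverings can be realized over (a cofinal subsequence of) any pre-assigned expansion, which is what ``a given polyhedral expansion'' in the statement refers to, whereas your construction only produces \emph{some} expansion adapted to the sequence. Since cofinal subsequences of expansions are again expansions with the same limit, and the downstream results (Theorem \ref{thm:expansion lim1}, Corollary \ref{cor:expansion 0-conn}) only need the existence form, this is a weakening rather than a gap, but you should state it as such. Your verification of the pull-back property is the other genuine difference, and it is arguably cleaner: from $\Ker(q_{j*})\subseteq H_j$ you get $q_{j*}^{-1}(K_j)=H_j\cdot\Ker(q_{j*})=H_j$, and surjectivity of $q_{j*}$ makes the pulled-back covering connected, identifying it with $\wX_j$ directly; the paper instead constructs a fibre-preserving lift $\wf_i\colon\wX_i\to\overline X_{j(i)}$ and applies the short five lemma to see that the induced map on fibres is a bijection. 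Finally, one point that both you and the paper pass over silently: the nerve projections satisfy only $q_j\simeq f_j\circ q_{j+1}$ up to homotopy, not on the nose; this suffices because every step (refinement of Spanier subgroups, the lifting criterion, equivalence of pull-backs) depends only on induced homomorphisms and homotopy classes, and because bonding lifts between coverings over a fixed base are unique once base points are chosen coherently --- but a careful write-up of your ``covering-space bookkeeping'' should say this explicitly.
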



\begin{proof}
Let
$$\xymatrix{
X_1 & X_2 \ar[l] & X_3 \ar[l] & \cdots \ar[l] & \varprojlim X_i = X\ar[l]
}$$
be a polyhedral expansion of $X$, such that the homomorphisms
$\pi_1(X)\to\pi_1(X_i)$ are surjective.

Let $p\colon\wX\to X$ be an arbitrary covering map. Then $p$ evenly covers elements of some open cover $\mathcal{U}$ of $X$
and $\pi_1(\wX)\supseteq \pi_1(X;\mathcal{U})$, where the latter denotes the $\mathcal U$-Spanier subgroup of $\pi_1(X)$. Since $X$ is compact we may assume that $\mathcal{U}$ is finite.
Choose another finite cover $\mathcal{V}$ of $X$ by path connected open sets whose double $2\,\mathcal{V}$ (set of all unions of two elements of $\mathcal{V}$) refines $\mathcal{U}$.


By Theorem 5 of \cite{Mardesic-Segal} there exists an $i$ such that the preimage of the open star of any vertex of $X_i$ is contained in an element of $\mathcal V$. Then by Lemma \ref{lem:kernel}, we have
$$\pi_1(\wX)\supseteq\pi_1(X;\mathcal{U})\supseteq\pi_1(X;2\,\mathcal{V})\supseteq \Ker\big(\pi_1(X)\to\pi_1(X_i)\big).$$
Therefore, if
$$\xymatrix{
X & \wX_1 \ar[l] & \wX_2 \ar[l] & \wX_3 \ar[l] & \cdots \ar[l]
}$$
is an inverse sequence of coverings over $X$ we may find for each $i$ some index $j(i)$ such that
$\pi_1(\wX_i)\supseteq \Ker\big(\pi_1(X)\to\pi_1(X_{j(i)})\big)$.

Denote $K_i:=\Ker\big(\pi_1(X)\to\pi_1(X_{j(i)})\big)$ and let $\overline X_{j(i)}\to X_{j(i)}$ be
the covering map that corresponds to the subgroup $\im \big(\pi_1(\wX_i)\to\pi_1(X_{j(i)})\big)$.
By the lifting theorem, there is a fibre-preserving map $\wf_i\colon \wX_i\to \overline X_{j(i)}$ for which
the following diagram commutes
$$\xymatrix{
\wX_i \ar[d] \ar[r]^{\wf_i} & \overline X_{j(i)} \ar[d]\\
X \ar[r] & X_{j(i)}}
$$
To compute the restriction of $\wf_i$ on the fibres we examine the following commutative diagram with exact rows
$$\xymatrix{
1 \ar[r] & \pi_1(\wX_i)/K_i \ar[r]\ar@{->>}[d] & \pi_1(X)/K_i\ar[r]\ar[d]^\cong &  \pi_1(X)/\pi_1(\wX_i) \ar[r]\ar[d] & 1\\
1 \ar[r] & \pi_1(\overline X_{j(i)}) \ar[r] & \pi_1(X_{j(i)})\ar[r] &  \pi_1(X_{j(i)})/\pi_1(\overline X_{j(i)}) \ar[r] & 1
}$$
By the short five-lemma we conclude that $\wf_i$ induces an isomorphism
$$\pi_1(X)/\pi_1(\wX_i)\to \pi_1(X_{j(i)})/\pi_1(\overline X_{j(i)}),$$
and hence a  bijection between the fibres of respective covering maps.
In other words, the above diagram is a pull-back of covering maps.

By repeating the above argument for all terms in an inverse sequence of covering maps over $X$ we obtain
 an inverse sequence of covering maps over the given expansion of $X$ with the same limit lifting projection.
\end{proof}

%
%

The above theorem shows that for most cases of interest it is sufficient to consider lifting projections over $X$ that
are inverse limits of covering maps over some expansion of $X$ as in

$$\xymatrix{
\wX_1 \ar[d]_{p_1} & \wX_2 \ar[d]_{p_2} \ar[l]_{\wf_{1}} & \wX_3 \ar[d]_{p_3} \ar[l]_{\wf_{2}} & \cdots\cdot\ar[l] & \wX \ar[l]\ar[d]^p\\
X_1  & X_2 \ar[l]^{f_{1}} & X_3 \ar[l]^{f_{2}} & \cdots\cdots\ar[l] & X\ar[l]}$$
Suppose  that $X_i$ are polyhedra and that the maps $X\to X_i$ induce epimorphisms between
respective fundamental groups (e.g. if $X$ is locally path-connected).
As in Section 2, we may compare the tail of the exact
homotopy sequence of
the fibration $p$ with the derived exact sequence of the inverse
limit functor to obtain the following diagram
$$\xymatrix@C=1.5pc{
1\ar[r] & \pi_1(\wX) \ar[r]\ar[d]_\varphi & \pi_1(X) \ar[r]\ar[d]_\iota & \varprojlim (\pi_1(X_i)/\pi_1(\wX_i)) \ar[r]\ar@{=}[d] & \pi_0(\wX) \ar[r]\ar[d]^\psi & {*}\\
1\ar[r] & \varprojlim\pi_1(\wX_i) \ar[r] & \varprojlim\pi_1(X_i) \ar[r] & \varprojlim (\pi_1(X_i)/\pi_1(\wX_i)) \ar[r] & \varprojlim^1\pi_1(\wX_i) \ar[r] & {*}}
$$
 The inverse limits $\varprojlim\pi_1(X_i)$ and $\varprojlim\pi_1(\wX_i)$
are independent of the choice of a polyhedral expansion for $X$.  That two polyhedral expansions give pro-isomorphic inverse limit groups is proved in  Mardesic-Segal \cite[Ch 2]{Mardesic-Segal} and the pro-isomorphism can be lifted to a pro-isomorphism of the corresponding covering subgroups via the homotopy lifting criterion.
The limit $\varprojlim\pi_1(X_i)$ is usually called the
\emph{\v Cech fundamental group}  (or the \emph{first shape group})
of $X$, and is denoted by $\check\pi_1(X)$.   Since $\wX$ is not necessarily compact, $\varprojlim\pi_1(\wX_i)$ is not necessarily the first shape group of $\wX$.  Regardless, we will still denote  $\varprojlim\pi_1(\wX_i)$ by $\check\pi_1(\wX)$. The kernel of the induced map $\iota\colon\pi_1(X)\to \check\pi_1(X)$
is called the \emph{shape kernel} of $X$ and denoted $\mathrm{ShKer}(X)$.
Note that by the exactness of the above diagram we have that
$$\Ker(\pi_1(\wX)\to\varprojlim\pi_1(\wX_i))=
\Ker(\pi_1(X)\to\varprojlim \pi_1(X_i))=\mathrm{ShKer}(X).$$

Clearly, $\psi\colon\pi_0(\wX)\to \varprojlim^1\pi_1(\wX_i)$ is a
surjective function
between homogeneous sets. Thus, in order to determine
$\pi_0(\wX)$ we need to
compute the fibres of $\psi$. From the exactness of the second row we
deduce that $\varprojlim^1\pi_1(\wX_i)$ is the set of cosets
of the action of the group $\check\pi_1(X)/\check\pi_1(\wX)$
on $\varprojlim (\pi_1(X_i)/\pi_1(\wX_i))$. Then a straightforward
diagram chasing shows that the fibres of $\psi$ can be naturally
identified with the cosets of the action of $\iota(\pi_1(X))$ on
$\check\pi_1(X)/\check\pi_1(\wX)$.

\begin{theorem}
\label{thm:expansion lim1}
Let $X$ be a path-connected space that admits a polyhedral expansion
$f_i\colon X\to X_i$ such that the induced homomorphisms $f_{i*}$ are surjective and
let $p\colon\wX\to X$ be the inverse limit of a sequence of coverings
$p_i\colon\wX_i\to X_i$ as described above. Then the fundamental group of $\wX$ is determined by the exact sequence of groups
$$1\to\mathrm{ShKer}(X)\to\pi_1(\wX)\to\check \pi_1(\wX),$$
and the set of path-components of $\wX$ is determined by the exact sequence of
groups and based homogeneous sets
$$\pi_1(X)\stackrel{\iota}{\longrightarrow} \check\pi_1(X)/\check\pi_1(\wX)
\longrightarrow \pi_0(\wX) \longrightarrow \varprojlim{\!^1} \pi_1(\wX_i) \longrightarrow *$$
\end{theorem}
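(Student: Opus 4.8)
The plan is to read both exact sequences directly off the comparison diagram that precedes the statement, since the analysis carried out there has already isolated every structural fact that is needed. Throughout I write $S:=\varprojlim(\pi_1(X_i)/\pi_1(\wX_i))$ for the common middle term, regard the top row as the tail of the homotopy exact sequence of the fibration $p$ (whose totally disconnected fibre is identified with $S$), and regard the bottom row as the six-term $\varprojlim/\varprojlim^1$ sequence, both of which are exact by the theory assumed above. The content of the theorem is then purely the compatibility of these two rows.

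For the first (fundamental group) sequence, I would simply invoke the already-established identity $\Ker\bigl(\pi_1(\wX)\to\varprojlim\pi_1(\wX_i)\bigr)=\mathrm{ShKer}(X)$. Since the left-hand vertical map $\varphi\colon\pi_1(\wX)\to\check\pi_1(\wX)=\varprojlim\pi_1(\wX_i)$ is induced by the projections $\wX\to\wX_i$ and its kernel is exactly $\mathrm{ShKer}(X)$, exactness of $1\to\mathrm{ShKer}(X)\to\pi_1(\wX)\to\check\pi_1(\wX)$ at $\mathrm{ShKer}(X)$ is injectivity of the inclusion and exactness at $\pi_1(\wX)$ is the kernel identity. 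No surjectivity of $\varphi$ is claimed, which is why the sequence terminates at $\check\pi_1(\wX)$.

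The heart of the argument is the second sequence, and here the key observation is that $\pi_0(\wX)$ and $\varprojlim^1\pi_1(\wX_i)$ are the orbit spaces of the single set $S$ under, respectively, the group $\pi_1(X)$ (acting through $\iota$) and the group $\check\pi_1(X)$ (acting by translation on the limit of coset spaces), the two actions being compatible via $\iota(\pi_1(X))\subseteq\check\pi_1(X)$. From the top row $\pi_0(\wX)=S/\pi_1(X)$, and from the bottom row $\varprojlim^1\pi_1(\wX_i)=S/\check\pi_1(X)$; since every $\check\pi_1(X)$-orbit is a union of $\pi_1(X)$-orbits this yields the surjection $\psi$ and the terminal $\to *$. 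I would then compute the stabiliser of the basepoint of $S$ inside $\check\pi_1(X)$: an element fixes the basepoint coset at every level exactly when it lies in each $\pi_1(\wX_i)$, so the stabiliser is $\check\pi_1(\wX)=\varprojlim\pi_1(\wX_i)$, whence the $\check\pi_1(X)$-orbit of the basepoint is equivariantly $\check\pi_1(X)/\check\pi_1(\wX)$. Exactness at $\pi_0(\wX)$ is then the statement that the fibre of $\psi$ over the basepoint is the image of this orbit, and exactness at $\check\pi_1(X)/\check\pi_1(\wX)$ is the identification of its further $\pi_1(X)$-collapse with the $\iota(\pi_1(X))$-orbits — both of which are the fibre computations recorded just before the theorem.

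The step I expect to be the main obstacle — or rather the one demanding the most care rather than ingenuity — is the bookkeeping of the non-abelian and pointed-set features: $\varprojlim^1\pi_1(\wX_i)$ is only a based homogeneous set, so "exactness" must be read as equality of image with preimage-of-basepoint, and the diagram chase must proceed with orbits and cosets rather than with cokernels. I would also take care to verify that $\check\pi_1(\wX)$ is normal in $\check\pi_1(X)$ (using normality of each $\pi_1(\wX_i)$ in $\pi_1(X_i)$, so that $\check\pi_1(X)/\check\pi_1(\wX)$ is genuinely a group) and that the $\pi_1(X)$-action on $S$ indeed factors through $\iota$, so that the two orbit-space descriptions are compatible.
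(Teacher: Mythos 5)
Your proposal is correct and takes essentially the same route as the paper: the theorem is read off the comparison of the tail of the homotopy exact sequence of the fibration $p$ with the six-term $\varprojlim$/$\varprojlim^1$ sequence, with the shape-kernel identity giving the fundamental-group sequence and the orbit/coset analysis of $\psi$ (which the paper dismisses as ``a straightforward diagram chasing'') giving the $\pi_0$ sequence. Your explicit stabiliser computation and the normality check for $\check\pi_1(\wX)$ merely flesh out details the paper leaves implicit.
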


The main advantage of Theorem \ref{thm:expansion lim1} with respect to the description of $\pi_0(\wX)$
given in Section \ref{sec:Preliminaries on inverse limits of spaces and groups} is that all limits and derived
limits are taken over inverse sequences of countable groups.

\begin{corollary}
\label{cor:expansion 0-conn}
Let $p\colon \wX\to X$ be the inverse limit of covering maps
as in the above
theorem. Then $\wX$ is path-connected if, only if, the inverse sequence
$$\pi_1(\wX_1)\leftarrow \pi_1(\wX_2)\leftarrow \pi_1(\wX_3)\leftarrow\cdots$$
satisfies the Mittag-Leffler property and the natural homomorphism
$$\pi_1(X)\longrightarrow \frac{\check\pi_1(X)}{\check \pi_1(\wX)}\cong\varprojlim
\left(\frac{\pi_1(X_i)}{\pi_1(\wX_i)}\right)$$
is surjective.
\end{corollary}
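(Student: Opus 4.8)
The plan is to read the characterization directly off the exact sequence of based homogeneous sets furnished by Theorem~\ref{thm:expansion lim1},
$$\pi_1(X)\stackrel{\iota}{\longrightarrow} \check\pi_1(X)/\check\pi_1(\wX)
\longrightarrow \pi_0(\wX) \longrightarrow \varprojlim{\!^1} \pi_1(\wX_i) \longrightarrow *,$$
together with the fibre computation for the surjection $\psi\colon\pi_0(\wX)\to\varprojlim^1\pi_1(\wX_i)$ carried out in the discussion preceding that theorem. Since $\wX$ is path-connected precisely when the pointed set $\pi_0(\wX)$ consists of a single point, the strategy is to show that $\pi_0(\wX)$ collapses to its basepoint exactly when the two stated conditions hold, by analysing separately the base $\varprojlim^1\pi_1(\wX_i)$ of $\psi$ and the fibre of $\psi$ over the basepoint.

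First I would record that, by that diagram chase, each fibre of $\psi$ is naturally the orbit set of the translation action of $\iota(\pi_1(X))$ on the group $\check\pi_1(X)/\check\pi_1(\wX)$, and that this orbit set is a single point if and only if $\iota$ maps $\pi_1(X)$ onto $\check\pi_1(X)/\check\pi_1(\wX)$. Consequently $\pi_0(\wX)=*$ forces, on the one hand, that $\varprojlim^1\pi_1(\wX_i)=*$ (because $\psi$ is surjective onto it), and on the other hand that the single fibre is a point, i.e.\ that $\iota$ is surjective. Conversely, if $\varprojlim^1\pi_1(\wX_i)=*$ then $\psi$ has exactly one fibre, and if in addition $\iota$ is surjective that fibre is a point, whence $\pi_0(\wX)=*$. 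This establishes that path-connectedness of $\wX$ is equivalent to the conjunction of the surjectivity of $\iota$ with the vanishing of $\varprojlim^1\pi_1(\wX_i)$.

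It then remains to trade the triviality of $\varprojlim^1\pi_1(\wX_i)$ for the Mittag-Leffler condition, which is where Proposition~\ref{prop:lim1 and ML} enters. The implication from Mittag-Leffler to $\varprojlim^1=*$ is immediate and requires no hypothesis on the groups. For the reverse implication I would invoke the converse half of Proposition~\ref{prop:lim1 and ML}, which is valid only when the groups of the sequence are countable; this is the one step to treat with care. I would dispatch it by observing that each $\wX_i$ is a covering space of the finite polyhedron $X_i$, so $\pi_1(\wX_i)$ embeds as a subgroup of the finitely generated group $\pi_1(X_i)$ and is therefore countable, legitimising the converse direction. I expect the only genuine subtlety to be the correct reading of exactness in the category of based homogeneous sets and the identification of the fibres of $\psi$ as orbit sets; once that bookkeeping is in place the equivalence falls out, with the countability remark supplying the sole additional hypothesis needed to recover Mittag-Leffler from the vanishing of $\varprojlim^1\pi_1(\wX_i)$.
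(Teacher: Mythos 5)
Your proposal is correct and follows essentially the same route as the paper: the paper likewise reads the equivalence directly off Theorem~\ref{thm:expansion lim1} (your fibre analysis of $\psi$ is just an explicit write-up of the diagram chase the paper delegates to that theorem and the discussion preceding it) and then invokes Proposition~\ref{prop:lim1 and ML}, with the same countability observation, to trade triviality of $\varprojlim^1\pi_1(\wX_i)$ for the Mittag-Leffler condition. The only point the paper adds that you omit is the brief remark that the isomorphism $\check\pi_1(X)/\check\pi_1(\wX)\cong\varprojlim\bigl(\pi_1(X_i)/\pi_1(\wX_i)\bigr)$ appearing in the statement is itself a consequence of the Mittag-Leffler property (via exactness of the bottom row) and fails in general.
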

\begin{proof}
Theorem \ref{thm:expansion lim1} implies that $\pi_0(\wX)$ is trivial if, and only if
$\varprojlim{\!^1} \pi_1(\wX_i)$ is trivial and
$\pi_1(X)\longrightarrow \check\pi_1(X)/\check \pi_1(\wX)$ is surjective.
Since the fundamental groups of polyhedra are countable, then by Proposition \ref{prop:lim1 and ML} the inverse sequence is Mittag-Leffler if and only if $\lim{^1} \pi_1(\wX_i)$ is trivial.

Observe that the isomorphism between $\check\pi_1(X)/\check \pi_1(\wX)$ and
$\varprojlim \big(\pi_1(X_i)/\pi_1(\wX_i)\big)$ follows form the Mittag-Leffler property and does
not hold in general.
\end{proof}

Let us consider a couple of examples.

\begin{example}
\label{ex:torus covering}
We have already mentioned the inverse sequence of covering maps over the infinite product of circles
$$(S^1)^\infty\leftarrow (\RR\times S^1\times S^1\times\ldots) \leftarrow (\RR\times \RR\times S^1\times\ldots)
\leftarrow  \cdots $$
It can be replaced by the following inverse sequence of covering maps over polyhedral approximations
of $(S^1)^\infty$ (with horizontal maps the usual projections)
$$\xymatrix{
\RR \ar[d] & \RR\times\RR  \ar[l]\ar[d] & \RR\times\RR\times\RR \ar[l]\ar[d] & \cdots \ar[l] & \RR^{\NN}\ar[d]\ar[l]\\
S^1        & S^1\times S^1 \ar[l]       & S^1\times S^1\times S^1 \ar[l]     & \cdots \ar[l] & (S^1)^{\NN}\ar[l]
}$$
The fundamental groups of the products of copies of $\RR$ are trivial so the corresponding sequence is Mittag-Leffler.
By Corollary \ref{cor:expansion 0-conn} the path-connectedness of $\RR^{\NN}$ is equivalent to the equality
$\pi_1\bigl((S^1)^{\NN}\bigr)=\varprojlim \pi_1\bigl((S^1)^n\bigr)=\check\pi_1\bigl((S^1)^{\NN}\bigr)$.
\end{example}
\begin{example}
Let $T^\infty$ be the infinite product of circles endowed with the CW-topology, i.e. the direct limit of the
sequence of spaces
$$S^1\hookrightarrow S^1\times S^1\hookrightarrow S^1\times S^1\times S^1 \hookrightarrow \cdots T^\infty$$
Clearly, the fundamental group of $T^\infty$ is $\bigoplus_{k=1}^\infty\ZZ$.
For each $i\in\NN$ let $p_i\colon\wX_i\to T^\infty$ be the covering map that corresponds to the subgroup
$\pi_1(\wX_i)=\bigoplus_{k=i}^\infty\ZZ$. By Corollary \ref{cor:cov over CW} the inverse limit
$\wX:=\varprojlim \wX_i$ is not path-connected.
\end{example}

\begin{remark} The last result is somewhat counter-intuitive, as one
could argue that $\wX_i\approx \RR^{i-1}\times T^\infty$
and that
the inverse limit of the coverings is simply the product $\RR^\infty$
but the computation reveals that the geometry of the inverse limit
must be different. We leave this as an (easy) exercise for the reader.
As a hint, note that the fibre of $\wX\to T^\infty$ is uncountable while
the fundamental group of $T^\infty$ is countable.
\end{remark}

\begin{example}
Consider the squaring lifting projection $p\colon \widetilde H_\infty\to H$  over the Hawaiian earring $H$
described in \cite[Section 2.7]{Conner-Herfort-Pavesic}.
The map $p$ is obtained as a limit of a sequence of 2-fold covering
projections and, although it resembles at first sight the construction
of the dyadic
solenoid, we have been able to prove by a geometric argument that its
total space $\widetilde H_\infty$ is path-connected. We are going to show how this
result is reflected in the algebraic computation of the set of
path-components of $\widetilde H_\infty$.

As explained in \cite[Section 3.3]{Conner-Herfort-Pavesic} we can
obtain the projection $p$ as follows.
Let $H_i$ be a wedge of $i$ circles, so that
$H_1\leftarrow H_2\,\leftarrow \ldots \leftarrow H$ is the standard polyhedral
expansion of the Hawaiian earring. For each $i$ let
$p_i\colon \widetilde H_i\to H_i$ be the $2^i$-fold covering
projection, obtained as
the restriction to $H_i\subset (S^1)^i$ of the squaring map
$$(S^1)^i\to (S^1)^i,\ \ \ \ (z_1,\ldots,z_i)\mapsto (z^2_1,\ldots,z^2_i).$$
It is then easy to check that $p$ is the inverse limit of the sequence
of coverings $p_i$, in particular
$\widetilde H_\infty=\varprojlim \widetilde H_i$.

The sequence of groups $\bigl(\pi_1(\widetilde H_i)\bigr)$ has surjective bonding
maps, so by Corollary \ref{cor:expansion 0-conn}
the path-connectedness of $\widetilde H_\infty$ is equivalent to the surjectivity
of the homomorphism $\pi_1(H)\to (\ZZ_2)^\NN$. But the latter is
obvious, because any sequence $(a_i)\in (\ZZ_2)^\NN$, where
$a_i\in\{0,1\}$ can be obtained as the image of the loop that
winds around the $i$-th circle exactly when $a_i=1$.
\end{example}

\begin{example}
We are going to show that a minor modification of covering maps
in the inverse sequence described in example \ref{ex:torus covering}
yields a completely different limit space. This is surprising and
very difficult to see geometrically. Let us define two covering
projections with fibre $\ZZ$. The first is the standard covering
exponential map from the real line to the circle
$$e\colon \RR\to S^1, \ \ \ \ e(t):=e^{2\pi it} $$
and the second combines the exponential covering with the two-fold
covering of the circle given by the squaring map
$$f\colon \RR\times S^1\to S^1\times S^1,\ \ \ f(t,z):=(e^{2\pi it}+z^2,z),$$
Note that the induced homomorphism $f_*\colon \pi_1(\RR\times S^1)\to
\pi_1(S^1\times S^1)$ sends the generator of
$\pi_1(\RR\times S^1)\cong\ZZ$
to the element $(2,1)\in\pi_1(S^1\times S^1)\cong\ZZ\oplus\ZZ$,
 therefore $f$ can be characterized as the
covering of $S^1\times S^1$ corresponding to the cyclic subgroup of
$\ZZ\times \ZZ$ generated by the element $(2,1)$.

The following diagram depicts two inverse sequences of covering maps over $(S^1)^\NN$.
$$\xymatrix{
\vdots \ar[d] & & \vdots \ar[d]\\
\RR\times \RR\times \RR\times S^1\times\cdots \ar[d]^{1\times 1\times e\times 1\times\ldots}& &
\RR\times \RR\times (\RR\times S^1)\times\cdots \ar[d]^{1\times 1\times f\times\ldots}\\
\RR\times \RR\times S^1\times S^1\times\cdots \ar[d]^{1\times e\times 1\times 1\times \ldots}& &
\RR\times (\RR\times S^1)\times S^1\times\cdots \ar[d]^{1\times f\times 1\times\ldots}\\
\RR\times S^1\times S^1\times S^1\times\cdots \ar[d]^{e\times 1\times 1\times 1\times\ldots} & &
(\RR\times S^1)\times S^1\times S^1\times\cdots \ar[d]^{f\times 1\times 1\times\ldots}\\
S^1\times S^1\times S^1\times S^1\times\cdots & & S^1\times S^1\times S^1\times S^1\times\cdots
}$$
The first sequence was already considered in Example \ref{ex:torus covering} where we showed that
its inverse limit is $\RR^\NN$, which is, of course, path-connected. The second is also a sequence
of $\ZZ$-covering maps and the form of the total spaces suggest that its limit is $\RR^\NN$ as well.
However, it is not difficult to check that there does not exist a map $\RR^\NN\to (S^1)^\NN$ that factors through all terms in the sequence.
Thus, the question is what is the inverse limit of the second sequence
of coverings?

To compute the set of path components of the inverse limit of the
second sequence we compare it with the following sequence over
a polyhedral expansion of $(S^1)^\NN$:
$$\xymatrix{
\RR\times S^1 \ar[d]_{p_1} & \RR\times\RR\times S^1 \ar[d]_{p_2} \ar[l]_{f_2} &
\RR\times\RR\times\RR\times S^1 \ar[d]_{p_3} \ar[l]_{f_3} & \cdots \ar[l] \\
S^1\times S^1  & S^1\times S^1\times S^1  \ar[l] &
S^1\times S^1\times S^1\times S^1  \ar[l] & \cdots \ar[l]
}$$
The maps $f_i\colon \RR^i\times S^1\to \RR^{i-1}\times S^1$ are defined as a product of a projection
on the first $i$-components with the square map (i.e. two-fold covering map) on $S^1$. Furthermore,
the maps $p_i\colon \RR^i\times S^1\to (S^1)^{i+1}$ are covering maps corresponding to respectively the
cyclic subgroup generated by the element $(2^i,2^{i-1},\ldots,2,1)\in \ZZ^{i+1}=\pi_1((S^1)^{i+1})$.
It is easy to check that the diagram commutes and that the $i$-th term in the original sequence is
the pull-back of $p_i$ along the projection map as in the diagram
$$\xymatrix{
\RR^i\times S^1\times \ldots \ar[d] \ar[r] &\RR^i\times S^1\ar[d]^{p_i}\\
(S^1)^{\NN} \ar[r] & (S^1)^{i+1}
}$$
It follows that the inverse limit of the original sequence and the inverse limit of the sequence
of covering over the polyhedral expansion coincide. We may now apply Theorem \ref{thm:expansion lim1}
to determine the set of components of the total space of the limit. In fact, the inverse sequence of the
fundamental groups of the covering spaces is
$$\xymatrix{ \ZZ & \ZZ \ar[l]_2 & \ZZ \ar[l]_2 & \ZZ \ar[l]_2 & \cdots\ar[l] }$$
which is not Mittag-Leffler, therefore its derived inverse limit
is non-trivial (it is actually an
uncountable abelian group). As a consequence, the total space of the
inverse limit has uncountably many
components. Indeed, by a closer examination of the inverse limit of
coverings over the polyhedral expansion of $(S^1)^\NN$ we can conclude
that the total space of the limit in the second case is homeomorphic
to the product of the dyadic solenoid ${\rm Sol}_2$ with $\RR^\NN$.
\end{example}


\section{Algebraic applications}

We begin by describing a construction of inverse sequences of covering
projections whose limits correspond to meaningful subgroups of the
fundamental group. Given a countable CW-complex $X$ and a continuous map $f\colon X\to S^1$
let $p\colon \wX\to X$ be the covering map obtained as a pullback of the universal
covering of $S^1$ along $f$, as in the following diagram
$$\xymatrix{
\wX \ar[r] \ar[d]_p & \RR \ar[d]^e \\
X \ar[r]_f & S^1
}$$
It is easy to check that $p$ is the covering map that corresponds
to the kernel of $f_*$, that is to say
$$\pi_1(\wX)\cong\im (p_*\colon \pi_1(\wX)\to\pi_1(X))=
\Ker(f_*\colon \pi_1(X)\to\pi_1(S^1)).$$
Clearly, $p$ is a regular covering whose fibres can be naturally
identified with the infinite cyclic group $\ZZ$, so we often say
that $p$ is the $\ZZ$-covering, corresponding to the map
$f\colon X\to S^1$ (or rather, to its homotopy class). Since $S^1$ is
an Eilenberg-MacLane space of type $K(\ZZ,1)$, there is a bijection
$$[X,S^1]=\Hom(\pi_1(X),\ZZ).$$
Thus, we may also say that $p$ is the $\ZZ$-covering corresponding
to a given homomorphism $\varphi\colon \pi_1(X)\to\ZZ$, in the sense
that $\pi_1(\wX)=\Ker\varphi$. Note, that every non-trivial subgroup
of $\ZZ$ is isomorphic to $\ZZ$, so we may assume without loss
of generality that $\varphi$ is surjective.

Given a sequence of homomorphisms  $\varphi_1,\varphi_2,\varphi_3,\ldots\colon\pi_1(X)\to\ZZ$,
let
$$\Phi_n:=(\varphi_1,\ldots,\varphi_n)\colon \pi_1(X)\to\ZZ^n$$ and let
$p_n\colon\wX_n\to X$ be the covering projection whose fundamental group is
$$K_n:=\Ker\Phi_n=\Ker \varphi_1\cap\ldots\cap\Ker\varphi_n.$$
Thus we obtain an inverse sequence of coverings
$$\xymatrix{
\wX_1 \ar[d]_{p_1} & \wX_2 \ar[d]_{p_2} \ar[l]_{f_{1}} & \wX_3 \ar[d]_{p_3}
\ar[l]_{f_{2}} & \cdots\ar[l] \\
X \ar@{=}[r] & X \ar@{=}[r] & X \ar@{=}[r] & \cdots }$$
where each $f_n$ is a covering projection. To compute the fibre of $f_n$ note that have
a short exact sequence
$$0\to K_{n-1}/K_n \longrightarrow \pi_1(X)/K_n \longrightarrow \pi_1(X)/K_{n-1}\to 0$$
where the second and third group are subgroups of $\ZZ^n$ and $\ZZ^{n-1}$ respectively.
From this we deduce that $K_{n-1}/K_n$ is either trivial or isomorphic to $\ZZ$, therefore
all $f_n$ are either trivial coverings (identity maps) or $\ZZ$-coverings.


Since $\Hom(\pi_1(X),\ZZ)$ is countable, we may apply the above construction to
the sequence of all homomorphisms from $\pi_1(X)$ to $\ZZ$. The limit of the resulting
inverse sequence of coverings is a lifting projection $\widehat p\colon\widehat X\to X$
whose fundamental group is
$$\pi_1(\widehat X)=\bigcap_{\varphi\colon\pi_1(X)\to\ZZ} \Ker\varphi.$$

Note that if $\pi_1(X)$ is finitely generated (e.g., if the 1-skeleton of $X$ is finite), then
$\widehat p$ is a covering projection. In fact, in that case
$\Hom(\pi_1(X),\ZZ)=\Hom(H_1(X),\ZZ)$ is also finitely generated and the
tower of coverings is actually finite (i.e., all but finitely many coverings in the sequence
are trivial). Alternatively, $\widehat p$ can be obtained as a covering of $X$
that corresponds to the kernel of the natural homomorphism
$\pi_1(X)\to FH_1(X)$, where $FH_1(X)$ denotes the maximal free abelian quotient
of $H_1(X)$.

What is the algebraic meaning of the intersection of kernels of
homomorphisms to some group? It is well-known that the commutator
subgroup $F_n'=[F_n,F_n]$ of the free group on $n$ generators $F_n$ consists
of all words in $F_n$ for which the sum of exponents of each letter
equals 0. This description is not intrinsic,
as it requires to choose a basis for the free group. An equivalent
description without reference to a basis is the following:
if $F$ is a free group (on any set of generators), then
$$F'=\bigcap_{f\colon F\to\ZZ}\Ker(f).$$
This approach was used in Cannon-Conner \cite[Section 4]{Cannon-Conner 1}
to describe the \emph{big commutator subgroup} of the fundamental
group of the Hawaiian earring $\HEG=\pi_1(H)$:
$$BC(\HEG)=\bigcap_{f\colon \HEG\to\ZZ}\Ker(f).$$
The big commutator subgroup is much larger than the usual
commutator subgroup $\HEG'$ but shares many interesting
properties  with the latter.

One can consider intersections of homomorphisms to other groups as well,
for example finite or torsion-free groups. In
\cite{Conner-Kent-Herfort-Pavesic} we gave an intrinsic description
of he second commutator subgroup $F_n''$ using homomorphisms
to the Baumslag-Solitar group $B(1,n)$.

Let us introduce the following functor for arbitrary groups $G$
and $A$:
$$\Ker_A(G):=\bigcap_{f\colon G\to A}\Ker(f).$$
Note that $\Ker_A(G)$ is a fully characteristic subgroup of $G$.
More generally, if $\varphi\colon G\to H$ is a homomorphism and if
$x\in \Ker_A(G)$, then for every $\psi\colon H\to A$ we have that
$\psi\circ\varphi(x)=0$, therefore $\varphi(x)\in\Ker_A(H)$.
In categorical terms the correspondence $G\mapsto \Ker_A(G)$ is a
covariant functor. It turns out that many characteristic subgroups
can be described as intersection of kernels to some group $A$.
In fact we may view $\Ker_A(G)$ as the part of $G$ that cannot be
represented in a product of copies of the group $A$. For example
$\Ker_\ZZ(G)=0$ if, and only if, $G$ is residually free-abelian.

We mentioned before that $\Ker_\ZZ(F)=F'$ and $\Ker_{B(1,n)}(F)=F''$
for every free group $F$.
Recall that a torsion-free abelian group $A$ is \emph{slender}
if every homomorphism
$\varphi\colon \ZZ^\NN\to A$ can be factored through some finite rank
free abelian group, i.e., there exists a homomorphism
$\varphi'\colon \ZZ^n\to A$ so that the following diagram commutes
$$\xymatrix{
\ZZ^\NN \ar[rr]^\varphi \ar[dr]_{\mathrm{pr}_n} & & A\\
& \ZZ^n \ar[ur]_{\varphi'}}
$$
(see Fuchs \cite[Ch. XIII]{Fuchs}) where $\mathrm{pr}_n$ is the projection map onto the first $n$ factors of $\ZZ^\NN$. Free abelian groups are slender, subgroups and extensions of slender groups are also slender. A reduced abelian group is slender if, and only if, it does not contain a subgroup isomorphic to the group $\ZZ^\NN$ or to a group of $p$-adic integers for some prime $p$ (cf. Fuchs \cite{Fuchs}).

Katsuya Eda \cite{eda92}
extended this concept to arbitrary groups by defining a group $A$ to be \emph{non-commutatively slender} (nc-slender) if every homomorphism $\varphi\colon \HEG\to A$ from the Hawaiian earring group $\HEG$ to $A$ can be factored through some free (non-commutative) group of finite rank as in $$\xymatrix{
\HEG \ar[rr]^\varphi \ar[dr]_{\mathrm{pr}_n} & & A\\
& F_n \ar[ur]_{\varphi'}}
$$
where $\mathrm{pr}_n$ is the homomorphism induced by collapsing all but the first $n$ loops of the Hawaiian earring.  

\begin{lemma}
\label{lem:lim slender}
If $A$ is slender, then
$$\Ker_A(\ZZ^\NN)=\varprojlim \Ker_A(\ZZ^n).$$
Similarly, if $A$ is nc-slender, then
$$\Ker_A(\hF)=\varprojlim \Ker_A(F_n) \ \ \text{and} \ \
\Ker_A(\HEG)=\HEG\cap \varprojlim \Ker_A(F_n).$$
\end{lemma}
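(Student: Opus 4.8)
The plan is to prove each of the three equalities by establishing two inclusions, where one inclusion is formal and holds for every $A$, while the other is exactly the place where the relevant slenderness hypothesis enters.

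First I would dispose of the inclusions ``$\subseteq$'' uniformly, using the functoriality of $\Ker_A$ recorded earlier: since the projections $\mathrm{pr}_n\colon\ZZ^\NN\to\ZZ^n$, $\mathrm{pr}_n\colon\hF\to F_n$ and $\mathrm{pr}_n\colon\HEG\to F_n$ are homomorphisms, they carry $\Ker_A$ of the source into $\Ker_A$ of the target. Concretely, if $x\in\Ker_A(\ZZ^\NN)$ and $g\colon\ZZ^n\to A$ is arbitrary, then $g\circ\mathrm{pr}_n\colon\ZZ^\NN\to A$ kills $x$, so $g(\mathrm{pr}_n(x))=0$; as $g$ was arbitrary this gives $\mathrm{pr}_n(x)\in\Ker_A(\ZZ^n)$ for every $n$, i.e. $x\in\varprojlim\Ker_A(\ZZ^n)$. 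The identical one-line argument yields $\Ker_A(\hF)\subseteq\varprojlim\Ker_A(F_n)$ and $\Ker_A(\HEG)\subseteq\HEG\cap\varprojlim\Ker_A(F_n)$, using in the last case that the projection $\HEG\to F_n$ is the restriction of $\mathrm{pr}_n\colon\hF\to F_n$ along the inclusion $\HEG\hookrightarrow\hF$.

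For the reverse inclusions the mechanism is the same in all three cases, and it is precisely slenderness that makes it run. Take $x$ in the right-hand side and a homomorphism $\varphi$ out of the large group; I want $\varphi(x)=0$. In the first case $A$ is slender, so $\varphi\colon\ZZ^\NN\to A$ factors as $\varphi=\varphi'\circ\mathrm{pr}_n$ for some $n$ and some $\varphi'\colon\ZZ^n\to A$; since $\mathrm{pr}_n(x)\in\Ker_A(\ZZ^n)\subseteq\Ker(\varphi')$ we conclude $\varphi(x)=\varphi'(\mathrm{pr}_n(x))=0$. The third equality is verbatim the same with $\HEG$ in place of $\ZZ^\NN$ and $F_n$ in place of $\ZZ^n$: nc-slenderness factors every $\varphi\colon\HEG\to A$ through $\mathrm{pr}_n\colon\HEG\to F_n$, and then $\mathrm{pr}_n(x)\in\Ker_A(F_n)$ forces $\varphi(x)=0$.

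The second equality, $\varprojlim\Ker_A(F_n)\subseteq\Ker_A(\hF)$, is where the real work lies, and I expect it to be the main obstacle. The argument above would close verbatim if every homomorphism $\varphi\colon\hF\to A$ factored through some $\mathrm{pr}_n\colon\hF\to F_n$ — but the definition of nc-slenderness only controls homomorphisms out of $\HEG$, and $\HEG$ is a proper (dense) subgroup of $\hF$. The plan is to reduce to the $\HEG$-case. Given $\varphi\colon\hF\to A$, restrict to $\HEG$ and apply nc-slenderness to obtain $n$ and $\psi\colon F_n\to A$ with $\varphi|_{\HEG}=\psi\circ\mathrm{pr}_n$. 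The projection $\mathrm{pr}_n\colon\hF\to F_n$ is split by the section $s_n\colon F_n\to\hF$ that includes $F_n$ as the first $n$ coordinates, whose image lies in $\HEG$; writing $x=k\cdot s_n(\mathrm{pr}_n(x))$ with $k\in\Ker(\mathrm{pr}_n)$ reduces the claim $\varphi=\psi\circ\mathrm{pr}_n$ to showing that $\varphi$ vanishes on $\Ker(\mathrm{pr}_n\colon\hF\to F_n)$. From the factoring on $\HEG$ we only get that $\varphi$ kills $\HEG\cap\Ker(\mathrm{pr}_n)$, so the crux is to control $\varphi$ on $\Ker(\mathrm{pr}_n)\setminus\HEG$. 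I would either invoke Eda's result that homomorphisms from $\hF$ into an nc-slender group factor through a finite-rank free quotient, or establish it directly by noting that $\Ker(\mathrm{pr}_n)=\varprojlim_m\Ker(F_m\to F_n)$ is itself an inverse limit of free groups with surjective bonding maps and that $\varphi$ restricted there is again governed by nc-slenderness. This factoring statement for $\hF$ is the single nontrivial input; once it is available, the second equality closes exactly as the other two.
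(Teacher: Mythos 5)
Your proposal follows essentially the same route as the paper: the inclusion $\Ker_A(\text{big group})\subseteq\varprojlim\Ker_A(\text{finite quotients})$ is obtained formally from functoriality of $\Ker_A$, and the reverse inclusion is obtained by factoring an arbitrary homomorphism to $A$ through some finite projection, which is exactly where slenderness (resp.\ nc-slenderness) enters. You have also correctly isolated the one genuinely nontrivial point, namely the equality $\Ker_A(\hF)=\varprojlim\Ker_A(F_n)$: nc-slenderness as defined only controls homomorphisms out of $\HEG$, not out of $\hF$. The paper resolves this precisely by citing the factorization theorem for $\hF$ --- that every homomorphism from $\hF$ to an nc-slender group factors through some $\mathrm{pr}_n\colon\hF\to F_n$ --- from the authors' companion preprint \cite{homlimpreprint}, and then says the rest follows analogously; so your primary plan (invoke an external factorization statement) is the paper's proof. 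Two caveats, though. First, the attribution is off: this factorization statement for $\hF$ is not Eda's result --- Eda's theorem concerns homomorphisms out of $\HEG$ (free $\sigma$-products), which is exactly why the passage from $\HEG$ to $\hF$ is the nontrivial step --- so you must cite \cite{homlimpreprint} (or prove it), not \cite{eda92}. Second, your fallback ``direct'' argument does not work as sketched: saying that $\varphi$ restricted to $\Ker(\mathrm{pr}_n)=\varprojlim_m\Ker(F_m\to F_n)$ is ``again governed by nc-slenderness'' is circular, since nc-slenderness says nothing about homomorphisms out of inverse limits, and moreover the groups $\Ker(F_m\to F_n)$ are free of infinite rank, so this kernel is not even an inverse limit of finite-rank free groups and the structure you are trying to recurse on does not recur. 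So keep the citation route and drop (or substantially rework) the direct one.
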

\begin{proof}
The natural projections $\mathrm{pr}_n\colon\ZZ^\NN\to\ZZ^n$ induce an isomorphism $\ZZ^\NN\to\varprojlim \ZZ^n$. By functoriality of $\Ker_A$, the homomorphisms $\Ker_A(\ZZ^\NN)\to \Ker_A(\ZZ^n)$ induced by $\mathrm{pr}_n$ are coherent and induce a homomorphism $\Ker_A(\ZZ^\NN)\to\varprojlim \Ker_A(\ZZ^n).$
This homomorphism is injective, because it is a restriction of the isomorphism $\ZZ^\NN\to\varprojlim \ZZ^n$. To prove surjectivity, consider an element $(x_i)\in \varprojlim \Ker_A(\ZZ^n)=\ZZ^\NN$.  We need only show that $(x_i)\in \Ker_A(\ZZ^\NN)$. Fix a homorphism $\varphi\colon\ZZ^\NN\to A$. Since $A$ is slender, $\varphi$ can be factored as $\varphi=\varphi'\circ\mathrm{pr}_n$ for some $n$ and some $\varphi'\colon \ZZ^n\to A$. Then $\varphi((x_i))=\varphi'(x_n)$ which is trivial since $x_n \in \Ker_A(\ZZ^n)$.  Thus $\Ker_A(\ZZ^\NN)\to\varprojlim \Ker_A(\ZZ^n)$ is surjective
as well.


In \cite{homlimpreprint}, the authors show that if $A$ is nc-slender then every homomorphism from $\hF$ to $A$ also factors through a projection to $F_n$ for some $n$. Then the proof for the non-commutatively slender cases follows analogously.
\end{proof}

As a consequence, $A$-kernels of $\hF$ with respect to a slender group $A$
give rise to inverse limits of coverings over the Hawaiian earring.
Indeed, recall that the Hawaiian earring $H$ can be represented as inverse limit of
a sequence
$$\cdots \leftarrow X_n\leftarrow X_{n+1} \leftarrow\cdots\leftarrow H,$$
where $X_n$ is a wedge of $n$ circles and the bonding maps are the
obvious projections. For each $n$ the fundamental group of $X_n$ is $F_n$,
the free group on $n$ generators and there is the universal covering projection
$q_n\colon \hX_n\to X_n=\hX_n/F_n$. The limit of the resulting inverse sequence of coverings
is a  fibration with unique path lifting property
$$q\colon \widehat{H}=\varprojlim \hX_n\to H=\varprojlim X_n.$$
By \cite[Section 4]{Conner-Pavesic} the group $\hF$ acts freely and transitively on
the fibres of $q$ so we may consider
the quotient of $\hH$ with respect to any (normal) subgroup of $\hF$.

\begin{proposition}
\label{prop:fibration}
If $A$ is any nc-slender group, then  the projection
$$q_A\colon\hH/\Ker_A(\hF)\to H$$
can be obtained as the inverse limit of coverings $q_n'\colon \hX_n/\Ker_A(F_n)\to X_n$,
and is
therefore a fibration with unique path-lifting property.
\end{proposition}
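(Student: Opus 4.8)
The plan is to exhibit $q_A$ as the inverse limit of the system $\{q_n'\}$ and then appeal to the closure of lifting projections under inverse limits. First I would observe that each $q_n'$ is a genuine covering projection: the subgroup $\Ker_A(F_n)$ is fully characteristic, hence normal, in $F_n=\pi_1(X_n)$, and since $\hX_n$ is the universal cover of $X_n$ carrying the free deck action of $F_n$, the quotient $\hX_n/\Ker_A(F_n)$ is exactly the regular covering of $X_n$ classified by $\Ker_A(F_n)$, with deck group $F_n/\Ker_A(F_n)$. To organize these into an inverse system over the expansion $\{X_n\}$ of $H$, I would use that the bonding map $\hX_{n+1}\to\hX_n$ of the tower defining $\hH$ is equivariant with respect to the induced projection $F_{n+1}\to F_n$ on deck groups; by the functoriality of $\Ker_A$ recorded above this projection sends $\Ker_A(F_{n+1})$ into $\Ker_A(F_n)$, so the bonding map descends to a map $\hX_{n+1}/\Ker_A(F_{n+1})\to\hX_n/\Ker_A(F_n)$ lying over $f_n\colon X_{n+1}\to X_n$. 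Thus $\{q_n'\}$ becomes an inverse system of coverings over a polyhedral expansion of $H$, and granting the identification below, the general theory of Section \ref{sec: inverse limits of coverings} (equivalently \cite[Section II.2]{Spanier}) guarantees that its limit is a fibration with unique path-lifting.

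The heart of the argument is to identify $\varprojlim\bigl(\hX_n/\Ker_A(F_n)\bigr)$ with $\hH/\Ker_A(\hF)$ as spaces over $H$. The quotient maps $\pi_n\colon\hX_n\to\hX_n/\Ker_A(F_n)$ are compatible with the bonding maps, so they assemble into a continuous map $\Pi:=\varprojlim\pi_n\colon\hH\to\varprojlim\bigl(\hX_n/\Ker_A(F_n)\bigr)$. Using Lemma \ref{lem:lim slender}, which gives $\Ker_A(\hF)=\varprojlim\Ker_A(F_n)$, an element $g=(g_n)\in\hF$ acts trivially on the target precisely when each $g_n\in\Ker_A(F_n)$, i.e. when $g\in\Ker_A(\hF)$; this shows at once that $\Pi$ factors through $\hH/\Ker_A(\hF)$ and that the induced map is injective, since two points over a common point of $H$ differ by a unique $g\in\hF$ (free transitive action on the fibres of $q$) and agree under $\Pi$ exactly when each $g_n\in\Ker_A(F_n)$. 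The resulting continuous bijection $\hH/\Ker_A(\hF)\to\varprojlim\bigl(\hX_n/\Ker_A(F_n)\bigr)$ over $H$ is the map realizing $q_A$ as the claimed inverse limit, and combined with the previous paragraph it yields that $q_A$ is a fibration with unique path-lifting.

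The main obstacle is the surjectivity of $\Pi$ together with the matching of topologies, both of which reduce to a Mittag-Leffler statement for the tower $\{\Ker_A(F_n)\}$. Here the key observation is that the standard section $s_n\colon F_n\hookrightarrow F_{n+1}$ (inclusion of the first $n$ free generators), which splits the projection $F_{n+1}\to F_n$, restricts to $\Ker_A$: for $w\in\Ker_A(F_n)$ and any $\varphi\colon F_{n+1}\to A$ the composite $\varphi\circ s_n\colon F_n\to A$ kills $w$, so $\varphi(s_n(w))=1$ and hence $s_n(w)\in\Ker_A(F_{n+1})$. Thus the bonding maps $\Ker_A(F_{n+1})\to\Ker_A(F_n)$ are (split) surjective, the tower $\{\Ker_A(F_n)\}$ is Mittag-Leffler, and $\varprojlim{\!^1}\Ker_A(F_n)=\{1\}$ by Proposition \ref{prop:lim1 and ML}. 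This vanishing lets me lift any coherent sequence of cosets in $\varprojlim\bigl(F_n/\Ker_A(F_n)\bigr)$ to a coherent sequence in $\hF=\varprojlim F_n$, giving surjectivity of $\Pi$ and, on fibres, the isomorphism $\hF/\Ker_A(\hF)\cong\varprojlim\bigl(F_n/\Ker_A(F_n)\bigr)$; the same surjective-bonding mechanism, applied to basic neighborhoods, is what I would use to show the continuous bijection above is open, hence a homeomorphism. I expect the point-set verification that the inverse-limit topology agrees with the quotient topology to be the most delicate part, whereas the algebraic content is carried entirely by the section argument and Lemma \ref{lem:lim slender}.
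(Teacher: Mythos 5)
Your proposal is correct and follows essentially the same route as the paper: the natural map from $\hH/\Ker_A(\hF)$ to $\varprojlim\bigl(\hX_n/\Ker_A(F_n)\bigr)$, the identification $\Ker_A(\hF)=\varprojlim\Ker_A(F_n)$ from Lemma \ref{lem:lim slender}, and the split-surjection/functoriality argument showing the bonding maps $\Ker_A(F_{n+1})\to\Ker_A(F_n)$ are surjective (hence Mittag-Leffler), which yields the fibre isomorphism $\hF/\Ker_A(\hF)\cong\varprojlim\bigl(F_n/\Ker_A(F_n)\bigr)$. The only difference is one of packaging: you lift coherent sequences point by point and sketch the point-set verification explicitly, whereas the paper reduces the claim to the fibres and invokes exactness of $\varprojlim$ for towers of short exact sequences with surjective bonding maps on the kernels.
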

\begin{proof}
For each $n$ we have a commutative diagram of the form
\[\xymatrix{
\hH \ar[r] \ar[d] & \hX_n\ar[d] \\
\hH/\Ker_A(\hF) \ar[r] \ar[d]_{q_A} & \hX_n/\Ker_A(F_n)\ar[d]^{q_n'} \\
H \ar[r] & X_n
}\]
where for each $n$ the map $q_n'$ is a covering projection with fibre
$F_n/\Ker_A(F_n)$. These coverings
form an inverse sequence and by naturality we get a mapping to the inverse limit
\[\xymatrix{
\hH/\Ker_A(\hF) \ar[r]^-f \ar[d]_{q_N} & \varprojlim \big(\hX_n/\Ker_A(F_n)\big)\ar[d]^q \\
H \ar@{=}[r] & \varprojlim X_n
}\]

To prove our claim it is sufficient to show that the restriction of $f$ to the fibres
\[f\colon \hF/\Ker_A(\hF)\to \varprojlim \big(F_n/\Ker_A(F_n)\big)\]
is an isomorphism. Consider the following inverse sequence of
short exact sequences
\[\xymatrix{
1 \ar[r] & \Ker_A(F_1) \ar[r] & F_1 \ar[r] & F_1/\Ker_A(F_1) \ar[r] &  1\\
1 \ar[r] & \Ker_A(F_2) \ar[u] \ar[r] & F_2\ar[u] \ar[r] & F_2/\Ker_A(F_2)
\ar[r] \ar[u]&  1\\
1 \ar[r] & \Ker_A(F_3) \ar[u] \ar[r] & F_3\ar[u] \ar[r] & F_3/\Ker_A(F_3)
\ar[r] \ar[u]&  1\\
 & \vdots \ar[u] & \vdots \ar[u] & \vdots \ar[u]
}\]
Each projection $F_{n+1}\to F_n$ is a split surjection, so by functoriality of $\Ker_A$
all bonding homomorphisms in the first
column are surjective. By \cite[Section 11.3]{Geoghegan}  the resulting sequence of
inverse limits is also exact.
Moreover, by Lemma \ref{lem:lim slender} $\varprojlim \Ker_A(F_n)=\Ker_A{\hF}$ so we have
the following short exact sequence
$$\xymatrix{
1 \ar[r] & \Ker_A(\hF) \ar[r] & \hF \ar[r] & \varprojlim \big(F_n/\Ker_A(F_n)\big) \ar[r] &
1,\\
}$$
which implies that the projections $\hF\to F_n$ induce the isomorphism
$$\hF/\Ker_A(\hF)\cong \varprojlim \big(F_n/\Ker_A(F_n)\big)$$
as claimed.
\end{proof}

In order to prove the main results of this section we will need an algebraic
lemma that is reminiscent of Theorem \ref{thm:expansion lim1}. Let $G$ be a subgroup
of the inverse limit
of a sequence of groups $\{G_i\}$ such that all projections
$G\to G_i$ are surjective. Furthermore, let $H_i$ be a decreasing
sequence of subgroups of $G$ and for each $i$ let $\overline H_i$ be
the image of $H_i$ in $G_i$. Observe that $G/H_i\cong G_i/\overline H_i$
(a bijection for arbitrary groups $H_i$, and a group isomorphism
if $H_i$ are normal subgroups of $G$).
We can fit the above data in a commutative diagram with
exact rows:
$$\xymatrix{
1 \ar[r] & \bigcap H_i \ar[r]\ar[d]  & G \ar[r]\ar[d]& \varprojlim G/H_i \ar[r]\ar@{=}[d]  &  \lim{^1} H_i \ar[r]\ar[d] & \bullet\\
1 \ar[r] & \varprojlim \overline{H}_i \ar[r] & \varprojlim G_i \ar[r] & \varprojlim G_i/\overline{H}_i \ar[r]  &  \lim{^1} \overline{H}_i \ar[r] & \bullet}
$$
The following lemma is proved by elementary diagram chasing.

\begin{lemma}
\label{lem:alg lim1}
$\Ker(\bigcap H_i \to \varprojlim \overline{H}_i)=
\Ker(G\to\varprojlim G_i)$, thus we have an exact sequence
$$1\longrightarrow\Ker(G\to\varprojlim G_i)\longrightarrow \bigcap H_i
\longrightarrow \varprojlim \overline{H}_i$$
Moreover
$\varprojlim{^1} H_i\to\varprojlim{^1}\overline{H}_i$ is a surjection of homogeneous
sets whose fibres can be naturally identified with the cokernel
of the homomorphism
$$G\to (\varprojlim G_i)/(\varprojlim \overline{H}_i)$$

As a consequence, $\varprojlim{^1}H_i$ is trivial iff $\varprojlim{^1}
\overline H_i$ is trivial
and $G\to (\varprojlim G_i)/(\varprojlim \overline{H}_i)$ is surjective.
\end{lemma}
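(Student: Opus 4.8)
The plan is to extract everything from the commutative ladder displayed above, whose two rows are the six-term exact sequences of $\varprojlim$ and $\varprojlim^1$ attached to the short exact sequences of towers $1\to\{H_i\}\to\{G\}\to\{G/H_i\}\to 1$ and $1\to\{\overline H_i\}\to\{G_i\}\to\{G_i/\overline H_i\}\to 1$; the vertical maps are induced by the inclusion $\iota\colon G\hookrightarrow\varprojlim G_i$ and by the identification $G/H_i\cong G_i/\overline H_i$, which makes the middle column the identity. Two preliminary facts drive the argument. First, $G\to G_i$ is surjective and factors through $G_{i+1}\to G_i$, so every bonding map of $\{G_i\}$ is surjective; hence $\{G_i\}$ is Mittag-Leffler and $\varprojlim^1 G_i=*$ by Proposition \ref{prop:lim1 and ML}, while $\{G\}$ is constant so $\varprojlim^1 G=*$ as well. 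Second, writing $Q:=\varprojlim(G/H_i)=\varprojlim(G_i/\overline H_i)$ for the common middle object, the connecting maps $\partial\colon Q\to\varprojlim^1 H_i$ and $\partial'\colon Q\to\varprojlim^1\overline H_i$ are surjective --- this is exactly where the vanishing of $\varprojlim^1 G$ and $\varprojlim^1 G_i$ is used --- and their fibres are the orbits of the diagonal actions of $G$, respectively $\varprojlim G_i$, on $Q$.

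For the first assertion I use only the left half of the ladder. Commutativity of the middle square, together with the equality of the middle column, shows that $G\xrightarrow{\iota}\varprojlim G_i\to Q$ agrees with $G\to Q$, so $\Ker(G\to\varprojlim G_i)\subseteq\Ker(G\to Q)=\bigcap H_i$ by exactness of the top row. Then commutativity of the left square and injectivity of $\varprojlim\overline H_i\hookrightarrow\varprojlim G_i$ give, for $x\in\bigcap H_i$, that $x$ dies in $\varprojlim\overline H_i$ if and only if it dies in $\varprojlim G_i$. Combining the two statements yields $\Ker(\bigcap H_i\to\varprojlim\overline H_i)=\Ker(G\to\varprojlim G_i)$, and the asserted exact sequence follows at once.

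For the second assertion, naturality of the connecting maps gives $\lambda\circ\partial=\partial'$, where $\lambda\colon\varprojlim^1 H_i\to\varprojlim^1\overline H_i$ is the induced map; since $\partial$ is onto, $\im\lambda=\im\partial'=\varprojlim^1\overline H_i$, so $\lambda$ is surjective. To compute a fibre I pass to orbits. The $G$-action on $Q$ is the restriction along $\iota$ of the $\varprojlim G_i$-action, so the $\varprojlim G_i$-orbits are unions of $G$-orbits and $\lambda$ is the natural projection $Q/G\to Q/(\varprojlim G_i)$. The fibre over the base orbit is thus $\big((\varprojlim G_i)\cdot q_0\big)/G$, where $q_0$ is the basepoint of $Q$; computing the $\varprojlim G_i$-stabiliser of $q_0$ to be precisely $\varprojlim\overline H_i$ identifies this orbit with $(\varprojlim G_i)/(\varprojlim\overline H_i)$, and quotienting out the induced action of $G$ identifies the fibre with the cokernel of $G\to(\varprojlim G_i)/(\varprojlim\overline H_i)$. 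The remaining fibres are treated identically after translating the basepoint into the chosen orbit.

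The final consequence then drops out: as $\lambda$ is a surjection with these fibres, $\varprojlim^1 H_i$ is a single point if and only if $\varprojlim^1\overline H_i$ is a single point and the base fibre is a single point, i.e. if and only if $\varprojlim^1\overline H_i=*$ and $G\to(\varprojlim G_i)/(\varprojlim\overline H_i)$ is surjective. I expect the main obstacle to be the second assertion, and within it the careful handling of the non-abelian $\varprojlim^1$ as a pointed set: one must justify that the connecting maps are genuinely surjective --- which is why deducing $\varprojlim^1 G_i=*$ from the surjectivity of the bonding maps is the crucial opening move --- and that the fibre of $\lambda$ over an arbitrary, not merely the base, orbit is again the same cokernel, which rests on the homogeneity of the $\varprojlim^1$ set rather than on a naive quotient of groups.
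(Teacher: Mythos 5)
Your proposal is correct and takes essentially the same approach as the paper: the paper's entire proof consists of displaying the two-row exact ladder and asserting that the lemma ``is proved by elementary diagram chasing,'' and your argument is precisely that chase written out in full (exactness of both rows via the Mittag-Leffler vanishing of $\varprojlim^1 G$ and $\varprojlim^1 G_i$, naturality of the connecting maps, and the orbit/coset description of the fibres of the non-abelian $\varprojlim^1$). You have simply supplied the details, including the stabiliser computation and the homogeneity point, that the paper leaves to the reader.
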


We have previously considered the inverse sequence of wedges of circles
$$\cdots \leftarrow X_n\leftarrow X_{n+1} \leftarrow\cdots\leftarrow H,$$
converging to the Hawaiian earring. Since $\pi_1(X_n)=F_n$, the free group
on $n$ generators $F_n$, the inverse sequence of spaces gives rise to
a homomorphism from the Hawaiian earring group $\HEG:=\pi_1(H)$ to the
inverse limit of finite rank free groups $\hF:=\varprojlim F_n$. By a result of Higman \cite{Higman}, see also
\cite{Cannon-Conner 1} and \cite{EdaKawamura}, the homomorphism
$\HEG\to\hF$ is injective, so the Hawaiian earring group can be
viewed as a subgroup of $\hF$. It is known that both the group $\HEG$
and its index in $\hF$ are uncountable, and that $\HEG$ can be viewed
as a dense subgroup of $\hF$ with respect to the inverse limit topology
on $\hF$. The following theorem gives a more precise description of the
relation between the two groups. The proof is a combination of algebraic
and geometric techniques developed in the previous sections.

\begin{theorem}
\label{thm:HEG internal}
The group $\hF$ is equal to the internal product of its subgroups
$\HEG$ and $\Ker_\ZZ(\hF)$, i.e., $\hF=\HEG\cdot \Ker_\ZZ(\hF)$.
\end{theorem}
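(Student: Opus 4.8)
The plan is to reduce the internal-product statement to a single surjectivity question and then settle it by an explicit loop construction. First I would observe that, by the remark following the definition of $\Ker_A$, the group $\Ker_\ZZ(\hF)$ is fully characteristic, hence normal, in $\hF$. Therefore $\HEG\cdot\Ker_\ZZ(\hF)$ is a subgroup, and the asserted equality $\hF=\HEG\cdot\Ker_\ZZ(\hF)$ holds if and only if the composite
$$\HEG\hookrightarrow\hF\longrightarrow \hF/\Ker_\ZZ(\hF)$$
is surjective: saying that every coset of $\Ker_\ZZ(\hF)$ meets $\HEG$ is precisely this surjectivity.

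Next I would identify the quotient $\hF/\Ker_\ZZ(\hF)$ concretely. Since $\ZZ=F_1$ is free it is nc-slender, and $\Ker_\ZZ(F_n)=F_n'$ for every $n$. Applying Proposition \ref{prop:fibration} with $A=\ZZ$ then yields the short exact sequence
$$1\to\Ker_\ZZ(\hF)\to\hF\to\varprojlim\big(F_n/F_n'\big)\to 1,$$
so that $\hF/\Ker_\ZZ(\hF)\cong\varprojlim(F_n/F_n')=\varprojlim\ZZ^n=\ZZ^\NN$, the isomorphism being induced by the projections $\hF\to F_n$ followed by abelianisation. (The same conclusion is packaged algebraically by Lemma \ref{lem:alg lim1} with $G=\HEG$, $G_i=F_i$, $\overline H_i=F_i'$.) Tracing the inclusion $\HEG\hookrightarrow\hF$ through this identification, the composite of the previous paragraph becomes the \emph{winding-number map} $\HEG\to\ZZ^\NN$ sending a loop in $H$ to the sequence of its winding numbers about the successive circles.

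It then remains to prove that this winding-number map $\HEG\to\ZZ^\NN$ is surjective; this is the one point where the specific group $\ZZ$ enters, and where the two halves of the theorem diverge. Given an arbitrary $(a_n)\in\ZZ^\NN$, I would realise it by a single loop $\ell$ in $H$: parametrise $I$ as a concatenation of intervals $I_1,I_2,\ldots$ accumulating at an endpoint, let $\ell$ wind $a_n$ times around the $n$-th circle on $I_n$, and send the accumulation point to the wedge point. Because the circles of $H$ shrink to the wedge point, $\ell$ is continuous, hence lies in $\HEG$, and by construction its image is $(a_n)$; this is the device already used for $\pi_1(H)\to(\ZZ_2)^\NN$ in the last example of Section \ref{sec: inverse limits of coverings}. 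Surjectivity of the winding-number map gives surjectivity of $\HEG\to\hF/\Ker_\ZZ(\hF)$, and the theorem follows.

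The work here is light precisely because $\ZZ$ is abelian, so the target is the tame group $\ZZ^\NN$ and winding numbers suffice. I do not expect a serious obstacle in this case; the genuine difficulty lives in the companion statement, where for $B(1,n)$ one has $\Ker_{B(1,n)}(F_n)=F_n''$ and the analogous quotient $\varprojlim(F_n/F_n'')$ is non-abelian, so that the corresponding map out of $\HEG$ fails to be onto. Isolating that failure of surjectivity — rather than any issue with the quotient identification, which Proposition \ref{prop:fibration} supplies uniformly via the split surjections $F_{n+1}\to F_n$ — is where the real obstacle of the section will lie.
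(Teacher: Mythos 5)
Your proof is correct, and every step is available from the paper's own toolkit: $\Ker_\ZZ(\hF)$ is fully characteristic, hence normal, so the equality $\hF=\HEG\cdot\Ker_\ZZ(\hF)$ is indeed equivalent to surjectivity of $\HEG\to\hF/\Ker_\ZZ(\hF)$; the identification $\hF/\Ker_\ZZ(\hF)\cong\varprojlim\bigl(F_n/F_n'\bigr)=\ZZ^\NN$ is exactly the short exact sequence established inside the proof of Proposition \ref{prop:fibration} (resting on Lemma \ref{lem:lim slender}, i.e.\ on $\ZZ$ being nc-slender --- a fact the paper's own argument also uses implicitly); and your accumulating-loop construction does realize any $(a_n)\in\ZZ^\NN$, since the circles of $H$ shrink to the wedge point. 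Where you genuinely diverge from the paper is the endgame. The paper applies Lemma \ref{lem:alg lim1} with $G=\HEG$, $G_i=F_i$, $H_i=\Ker\bigl(\HEG\to H_1(X_i)\bigr)$ to reduce the theorem to triviality of $\varprojlim^1 H_i$, then invokes the correspondence between $\varprojlim^1$ and $\pi_0$ of an inverse limit of coverings: the coverings of the wedges $X_i$ determined by the commutator subgroups $F_i'$ are the integral $1$-dimensional grids in $\RR^i$, their limit is the integral grid in $\RR^\NN$, and that grid is path-connected. The two endgames are equivalent: by Lemma \ref{lem:alg lim1} (with $\varprojlim^1\overline H_i$ trivial because the bonding maps $F_{i+1}'\to F_i'$ are onto), triviality of $\varprojlim^1 H_i$ is precisely your surjectivity statement, and a path in the grid from the origin to the lattice point $(a_n)$ projects to exactly your loop with winding numbers $(a_n)$. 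What your version buys is economy: it bypasses $\varprojlim^1$ and the covering-space geometry entirely, and makes transparent that only the abelianness of $\ZZ$ and the shrinking of the circles are needed. What the paper's version buys is the geometric framework that it then reuses in Theorem \ref{thm:HEG}, where the analogous limit space must be shown to be path-disconnected --- a negative statement for which your direct ``write down a loop'' shortcut has no counterpart, as you correctly anticipate in your closing paragraph.
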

\begin{proof}
We are going to apply Lemma \ref{lem:alg lim1} to the following data: let $G_i$ be the free group on $i$ generators $F_i$, $G=\HEG$, and $H_i$ the kernel of the epimorphism $\HEG\to \pi_1(X_i)\to H_1(X_i)\cong\ZZ^i$.  Then $\varprojlim G_i=\hF$.  Observe that the image of $H_i$ in $\pi_1(X_i) =F_i$ is precisely the commutator subgroup $F_i'=\Ker_\ZZ(F_i)$, since $\HEG$ maps surjectively onto $\pi_1(X_i)$. The bonding maps in the inverse sequence of groups $\{\overline H_i\}$ are surjective, therefore $\varprojlim^1\overline H_i$ is trivial. Moreover, by Lemma \ref{lem:lim slender}
$\varprojlim\overline H_i=\varprojlim\Ker_\ZZ(F_i)=\Ker_\ZZ(\hF)$. Thus, by Lemma \ref{lem:alg lim1} the formula $\hF=\HEG\cdot \Ker_\ZZ(\hF)$ holds if, and only if, the derived inverse limit $\varprojlim^1 H_i$ is trivial.

It is well-known that the limit $\varprojlim^1 H_i$ is precisely the set
of path-components of the inverse limit of covering maps
that correspond to
the sequence of kernels of the homomorphisms $\{\HEG\to H_1(X_i)\}$ (see \cite{Cohen70, Hirschorn15}, as well as our discussion in Section \ref{sec:Preliminaries on inverse limits of spaces and groups}).
Analogously as in the examples in Section 3, we can represent this
lifting projection as a limit of covering spaces over the approximations
of the Hawaiian earrings by finite wedges of circles $X_i$. The covering
space over $X_i$ corresponding to the commutator subgroup $F_i$ can be
identified with the integral 1-dimensional grid in $\RR^i$, i.e.
$$\wX_i =\{(x_1,\ldots,x_i)\in\RR^i\mid x_j\notin\ZZ\ \ \text{for at most
one index}\ j\}.$$
Observe that the bonding maps in the inverse system
$$\wX_1\longleftarrow \wX_2\longleftarrow \wX_3 \longleftarrow\cdots$$
are retractions, and so their inverse limit $\wX$ is the
1-dimensional integral grid in $\RR^\NN$. Alternatively, we may describe
$\wX$ by the following pullback diagram
$$\xymatrix{
\wX \ar[r] \ar[d] & \RR^\NN \ar[d]^{e^\NN}\\
H \ar@{^(->}[r] & (S^1)^\NN
}$$
Clearly, $\wX$ is path-connected, which completes the proof of our claim.
\end{proof}

Note that there one could also consider the commutator subgroup $\hF'$ of
$\hF$, which is however much smaller than $\varprojlim F_n'$. In fact, $\hF>\HEG\cdot \hF'$.

Continuing the previous line of thought we may ask whether $\hF$ can be obtained
by adding some other term of the derived series of $\hF$ to the group $\HEG$. As before, we replace
$\hF''$ with a suitable inverse limit group. We already mentioned that
the second derived group of a free group $F$ can be described as a kernel, $F''=\Ker_{B}(F)$
(see \cite[Theorem 1]{Conner-Kent-Herfort-Pavesic}) for any solvable, deficiency $1$ group $B$ that is
not virtually abelian.  By work of Wilson \cite{Wilson96}, any solvable deficiency $1$ group is isomorphic to
a Baumslag-Solitar group $B(1,m)$ for some $m$. Since $B(1,m)$
is nc-slender (see \cite{ConnerCorson19}), Lemma \ref{lem:lim slender} implies
that $\Ker_{B}(\hF)=\varprojlim \Ker_{B}(F_n)$ for every group $B$ that is solvable, of deficiency $1$ and
is not virtually abelian.  As well, we have
$$\hF''\le \Ker_{B}(\hF)\le \Ker_{\ZZ}(\hF).$$

In the next theorem we reverse the reasoning and use our methods to show
that this subgroup of $\hF$ is too small to generate, together with
the Hawaiian earring group the entire group $\hF$.

\begin{theorem}
\label{thm:HEG}
$\hF\ne\HEG\cdot \Ker_{B}(\hF)$ for every group $B$ that is solvable, of deficiency $1$ and
is not virtually abelian.
\end{theorem}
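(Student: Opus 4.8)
The plan is to mirror the strategy of Theorem \ref{thm:HEG internal}, applying Lemma \ref{lem:alg lim1} to the \emph{metabelian} quotients in place of the abelian ones, and then to show that the resulting derived limit is \emph{non}-trivial by producing an explicit obstruction. Recall from the discussion preceding the theorem that $\Ker_B(F_n)=F_n''$, and that, since $B$ is nc-slender, Lemma \ref{lem:lim slender} gives $\Ker_B(\hF)=\varprojlim F_n''$. I would apply Lemma \ref{lem:alg lim1} with $G_i=F_i$, $G=\HEG$, and $H_i:=\Ker\bigl(\HEG\to F_i/F_i''\bigr)$; since $\HEG$ maps onto $F_i$, the image $\overline H_i$ of $H_i$ in $F_i$ is exactly $F_i''$, so $\varprojlim\overline H_i=\Ker_B(\hF)$. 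The projections $F_{i+1}\to F_i$ are (split) surjective, hence so are the bonding maps $F_{i+1}''\to F_i''$, and therefore $\varprojlim^1\overline H_i$ is trivial. Writing $M_i:=F_i/F_i''$ for the free metabelian group of rank $i$, Lemma \ref{lem:alg lim1} then reduces the claim $\hF\ne\HEG\cdot\Ker_B(\hF)$ to the single assertion that the natural map $\HEG\to\varprojlim M_i$ is \emph{not} surjective, equivalently that $\varprojlim^1 H_i$ is nontrivial.

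To exhibit an element of $\varprojlim M_i$ outside the image of $\HEG$, I would take
$$c_n:=\prod_{i=2}^{n}[x_1,x_i]\in F_n,$$
where $x_1,x_2,\ldots$ are the standard generators. The bonding map $F_{n+1}\to F_n$ kills $x_{n+1}$ and so sends $c_{n+1}$ to $c_n$; hence the classes $\bar c_n\in M_n$ form a coherent sequence $(\bar c_n)\in\varprojlim M_i$. The point of this particular choice is that $c_n$ forces the generator $x_1$ to occur roughly $2(n-1)$ times, and I claim this growth survives into the metabelian quotient, whereas every element of $\HEG$ uses each fixed generator only finitely often.

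To make the obstruction precise I would use Fox calculus and the Magnus embedding of the free metabelian group: the abelianized Fox derivatives $\bar\partial_j\colon M_n\to\ZZ[\ZZ^n]=\ZZ[t_1^{\pm},\ldots,t_n^{\pm}]$ are well defined and jointly injective on $M_n$. A routine computation gives $\bar\partial_1 c_n=(n-1)-\sum_{i=2}^{n}t_i$, whose coefficient of the identity monomial equals $n-1$. On the other hand, for $w\in\HEG$ the reduced (transfinite) word representing $w$ contains only finitely many occurrences, say $N$, of $x_1^{\pm1}$ — a standard feature of the Hawaiian earring group viewed as a free $\sigma$-product — so its projection $w_n\in F_n$ contains at most $N$ occurrences of $x_1^{\pm1}$, and the identity-coefficient of $\bar\partial_1 w_n$ is then bounded in absolute value by $N$ for every $n$. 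If some $w\in\HEG$ satisfied $w_n\equiv c_n\pmod{F_n''}$ for all $n$, joint injectivity of the $\bar\partial_j$ on $M_n$ would force $\bar\partial_1 w_n=\bar\partial_1 c_n$, hence $N\ge n-1$ for all $n$, a contradiction. Thus $(\bar c_n)$ is not in the image of $\HEG$, the map $\HEG\to\varprojlim M_i$ is not surjective, and $\hF\ne\HEG\cdot\Ker_B(\hF)$.

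I expect the main obstacle to lie in the second and third steps rather than in the reduction: one must choose the test sequence $c_n$ so that it is both coherent under the bonding maps and carries a metabelian invariant that genuinely grows, and then verify that this invariant is controlled \emph{uniformly in} $n$ by the finite $x_1$-multiplicity of Hawaiian earring words. The solvability and non-virtual-abelianness of $B$ enter only through the identification $\Ker_B(F_n)=F_n''$; once the problem is transported to the free metabelian groups $M_n$, the argument is uniform in $B$.
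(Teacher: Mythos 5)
Your proof is correct, but after the common algebraic reduction it takes a genuinely different route from the paper's. Both arguments use Lemma \ref{lem:alg lim1} (together with $\Ker_B(F_n)=F_n''$, nc-slenderness of $B$, and split surjectivity of $F_{n+1}''\to F_n''$) to reduce the claim to the non-surjectivity of $\HEG\to\varprojlim\bigl(F_n/F_n''\bigr)$. At that point the paper stays geometric: it interprets the relevant $\varprojlim^1$ as $\pi_0$ of an inverse limit $\wY$ of coverings of the Hawaiian earring, and shows $\wY$ is path-disconnected by mapping it onto a path-disconnected space $\widetilde Z$, obtained by pulling back an inverse limit of covers of $\wX_2$ associated to a strictly decreasing chain of kernels of homomorphisms $F_2'\to\ZZ$ intersecting in $F_2''$; the disconnectedness of that limit comes from Corollary \ref{cor:cov over CW}, which applies precisely because $F_2'$ is \emph{countable}. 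You instead produce an explicit algebraic obstruction: the coherent sequence $\bar c_n=\prod_{i=2}^n[x_1,x_i]$ in the free metabelian groups $M_n$, which cannot be hit by $\HEG$ because the identity coefficient of the abelianized Fox derivative $\bar\partial_1\bar c_n$ equals $n-1$, while for any $w\in\HEG$ that coefficient is bounded uniformly in $n$ by the (finite) number of occurrences of $x_1^{\pm1}$ in the transfinite word of $w$. Your computation checks out: $\bar\partial_1[x_1,x_i]=1-t_i$, the well-definedness of $\bar\partial_j$ on $F_n/F_n''$ is exactly Magnus' theorem, and the finite-multiplicity property of $\HEG$-words is standard (Cannon--Conner, Eda); note, though, that what your argument actually uses is this well-definedness, not the joint injectivity you invoke. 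What your route buys is an explicit witness, namely the element $(c_n)\in\hF$ (morally the infinite product $\prod_{i\ge2}[x_1,x_i]$) lying outside $\HEG\cdot\Ker_B(\hF)$, at the price of importing Fox calculus and the combinatorial word structure of $\HEG$; the paper's route avoids all word combinatorics and reuses its own covering-space machinery, in particular the countability dichotomy of Corollary \ref{cor:cov over CW} applied to the countable group $F_2'$, but produces no explicit element.
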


\begin{proof}
Let $B$ be a solvable deficiency $1$ group that is not virtually abelian.  Then $B$ is isomorphic to a Baumslag-Solitar group $B(1,m)$ and, for a
free group of rank $n$, we have
$\Ker_B(F_n)=F_n''=\Ker_\ZZ(\Ker_\ZZ(F_n))$.
We may repeat almost verbatim the algebraic part of the proof of the previous
theorem and obtain that
$\hF=\HEG\cdot \Ker_{B}(\hF)$ if, and only if, the inverse limit
$\widetilde Y$ of the sequence of coverings over the
Hawaiian earring $H$, determined
by the kernels
of homomorphisms $\HEG\to F_i/F_i''$, is path-connected.
Thus, in order to
prove our claim, we must show that $\widetilde Y$ is not path-connected.

We will use the same notation as in the proof of the previous
theorem.
Let $\wX$ be the integral grid in $\RR^\NN$ and let $p\colon \wX\to H$ be
the lifting projection obtained as the limit of the sequence
$$\xymatrix{
\wX_1 \ar[d]_{p_1} & \wX_2 \ar[d]_{p_2} \ar[l] & \wX_3
\ar[d]_{p_3} \ar[l] & \cdots\ar[l] & \wX \ar[d]^p\ar[l]\\
X_1  & X_2 \ar[l] & X_3 \ar[l] & \cdots\ar[l] & H\ar[l]}$$
where $\pi_1(\wX_n)=F_n'$. Let $\wY_n$ be a cover of
$\wX_n$ corresponding to the subgroup $\bigl(\pi_1(\wX_n)\bigr)'=F_n''$
and let $\wY$ be the resulting inverse limit.
We will construct a continuous surjection from $\wY$ to a path-disconnected space.
Since $\pi_1(\wX_2)=F_2'$ is a countably generated free group, we can find
a sequence of homomorphisms $f_i:\pi_1(\wX_2)\to \ZZ$ such that
$$\bigcap\limits_{i=1}^n \Ker(f_i)\subsetneq \bigcap\limits_{i=1}^{n-1} \Ker(f_i)\ \ \text{and}
\ \ \bigcap\limits_{i=1}^\infty \Ker(f_i)=F_2''.$$
Let $\widetilde Z_n'$ be the cover of $\wX_2$ corresponding to the subgroup
$\bigcap\limits_{i=1}^n \Ker\bigl(\pi_1(\wX_2) \stackrel{f_i}{\longrightarrow} \ZZ\bigr).$
The covers $\widetilde Z_n'$ form an inverse sequence, whose limit
$\widetilde Z':=\varprojlim \widetilde Z_n'$ is path-disconnected by Corollary \ref{cor:cov over CW}.
As a consequence, the pullback of $\widetilde Z'$ along the projection $p\colon\wX\to\wX_2$
$$\xymatrix{
\widetilde Z' \ar[d] & \widetilde Z \ar[l] \ar[d]\\
\wX_2 & \wX \ar[l]^p
}$$
yields a lifting
projection $\widetilde Z\to \wX$, whose total space is also path-disconnected. Note that
$\widetilde Z$ can be alternatively obtained as inverse limit of a sequence
$\widetilde Z=\varprojlim \widetilde Z_n$, where $\widetilde Z_n$ is the pull-back of
$\widetilde Z_n'$ along the projection $p_n\colon\wX_n\to\wX_2$. Since commutator subgroups
are characteristic we have $p_{n*}(\pi_1(\wY_n))\le \pi_1(\wY_2)\le \pi_1(\widetilde Z')$, thus
$\pi_1(\wY_n)\le\pi_1(\widetilde Z_n)=p_{n*}^{-1}(\pi_1(\widetilde Z'))$. It follows
that   $\wY_n$ covers $\widetilde Z_n$ for every $n$ and we obtain the
 following diagram:
$$\xymatrix{
\wY_2 \ar[d] & \wY_3 \ar[d] \ar[l] & \cdots \ar[l] & \wY_{n-1}
\ar[d] \ar[l] & \wY_n \ar[d] \ar[l] & \wY_{n+1} \ar[d] \ar[l] & \cdots \ar[l] &\wY\ar[d]\ar[l]\\
\widetilde Z_2 \ar[d] & \widetilde Z_3 \ar[d] \ar[l] & \cdots \ar[l] & \widetilde Z_{n-1} \ar[d]
\ar[l] & \widetilde Z_n \ar[d] \ar[l] & \widetilde Z_{n+1} \ar[d] \ar[l] & \cdots\ar[l] &
\widetilde Z\ar[d]\ar[l]\\
\wX_2  & \wX_3 \ar[l] & \cdots \ar[l] & \wX_{n-1} \ar[l] &
\wX_{n} \ar[l] & \wX_{n+1} \ar[l] & \cdots \ar[l] & \ar[l]\wX}
$$
By construction, the fibre of the lifting projection $\wY\to\wX$ maps surjectively to the fibre of
$\widetilde Z\to\wX$.
Since every point in $\widetilde Z$ (respectively $\wY$) is connected by a path to a point in
the fibre, it follows, that the projection $\wY\to \widetilde Z$ is surjective, therefore
$\wY$ is not path-connected.
\end{proof}


\ignore{
\section{Algebraic applications ALT}

We begin by describing a construction of inverse sequences of covering
projections whose limits correspond to meaningful subgroups of the
fundamental group. Given a countable CW-complex $X$ and a continuous map $f\colon X\to S^1$
let $p\colon \wX\to X$ be the covering map obtained as a pullback of the universal
covering of $S^1$ along $f$, as in the following diagram
$$\xymatrix{
\wX \ar[r] \ar[d]_p & \RR \ar[d]^e \\
X \ar[r]_f & S^1
}$$
It is easy to check that $p$ is the covering map that corresponds
to the kernel of $f_*$, that is to say
$$\pi_1(\wX)\cong\im (p_*\colon \pi_1(\wX)\to\pi_1(X))=
\Ker(f_*\colon \pi_1(X)\to\pi_1(S^1)).$$
Clearly, $p$ is a regular covering whose fibres can be naturally
identified with the infinite cyclic group $\ZZ$, so we often say
that $p$ is the $\ZZ$-covering, corresponding to the map
$f\colon X\to S^1$ (or rather, to its homotopy class). Since $S^1$ is
an Eilenberg-MacLane space of type $K(\ZZ,1)$, there is a bijection
$$[X,S^1]=\Hom(\pi_1(X),\ZZ).$$
Thus, we may also say that $p$ is the $\ZZ$-covering corresponding
to a given homomorphism $\varphi\colon \pi_1(X)\to\ZZ$, in the sense
that $\pi_1(\wX)=\Ker\varphi$. Note, that every non-trivial subgroup
of $\ZZ$ is isomorphic to $\ZZ$, so we may assume without loss
of generality that $\varphi$ is surjective.

Given a sequence of homomorphisms  $\varphi_1,\varphi_2,\varphi_3,\ldots\colon\pi_1(X)\to\ZZ$,
let
$$\Phi_n:=(\varphi_1,\ldots,\varphi_n)\colon \pi_1(X)\to\ZZ^n$$ and let
$p_n\colon\wX_n\to X$ be the covering projection whose fundamental group is
$$K_n:=\Ker\Phi_n=\Ker \varphi_1\cap\ldots\cap\Ker\varphi_n.$$
Thus we obtain an inverse sequence of coverings
$$\xymatrix{
\wX_1 \ar[d]_{p_1} & \wX_2 \ar[d]_{p_2} \ar[l]_{f_{1}} & \wX_3 \ar[d]_{p_3}
\ar[l]_{f_{2}} & \cdots\ar[l] \\
X \ar@{=}[r] & X \ar@{=}[r] & X \ar@{=}[r] & \cdots }$$
where each $f_n$ is a covering projection. To compute the fibre of $f_n$ note that have
a short exact sequence
$$0\to K_{n-1}/K_n \longrightarrow \pi_1(X)/K_n \longrightarrow \pi_1(X)/K_{n-1}\to 0$$
where the second and third group are subgroups of $\ZZ^n$ and $\ZZ^{n-1}$ respectively.
From this we deduce that $K_{n-1}/K_n$ is either trivial or isomorphic to $\ZZ$, therefore
all $f_n$ are either trivial coverings (identity maps) or $\ZZ$-coverings.


Since $\Hom(\pi_1(X),\ZZ)$ is countable, we may apply the above construction to
the sequence of all homomorphisms from $\pi_1(X)$ to $\ZZ$. The limit of the resulting
inverse sequence of coverings is a lifting projection $\widehat p\colon\widehat X\to X$
whose fundamental group is
$$\pi_1(\widehat X)=\bigcap_{\varphi\colon\pi_1(X)\to\ZZ} \Ker\varphi.$$

Note that if $\pi_1(X)$ is finitely generated (e.g., if the 1-skeleton of $X$ is finite), then
$\widehat p$ is a covering projection. In fact, in that case
$\Hom(\pi_1(X),\ZZ)=\Hom(H_1(X),\ZZ)$ is also finitely generated and the
tower of coverings is actually finite (i.e., all but finitely many coverings in the sequence
are trivial). Alternatively, $\widehat p$ can be obtained as a covering of $X$
that corresponds to the kernel of the natural homomorphism
$\pi_1(X)\to FH_1(X)$, where $FH_1(X)$ denotes the maximal free abelian quotient
of $H_1(X)$.

What is the algebraic meaning of the intersection of kernels of
homomorphisms to some group? It is well-known that the commutator
subgroup $F_n'=[F_n,F_n]$ of the free group on $n$ generators $F_n$ consists
of all words in $F_n$ for which the sum of exponents of each letter
equals 0. This description is not intrinsic,
as it requires to choose a basis for the free group. An equivalent
description without reference to a basis is the following:
if $F$ is a free group (on any set of generators), then
$$F'=\bigcap_{f\colon F\to\ZZ}\Ker(f).$$
This approach was used in Cannon-Conner \cite{Cannon-Conner 1}
to describe the \emph{big commutator subgroup} of the fundamental
group of the Hawaiian earring $\HEG=\pi_1(H)$:
$$BC(\HEG)=\bigcap_{f\colon \HEG\to\ZZ}\Ker(f).$$
The big commutator subgroup is much larger than the usual
commutator subgroup $\HEG'$ but shares many interesting
properties  with the latter.

One can consider intersections of homomorphisms to other groups as well,
for example finite or torsion-free groups. In
\cite{Conner-Kent-Herfort-Pavesic} we gave an intrinsic description
of he second commutator subgroup $F_n''$ using homomorphisms
to the Baumslag-Solitar group $B(1,n)$.

Let us introduce the following functor for arbitrary groups $G$
and $A$:
$$\Ker_A(G):=\bigcap_{f\colon G\to A}\Ker(f).$$
Note that $\Ker_A(G)$ is a fully characteristic subgroup of $G$.
More generally, if $\varphi\colon G\to H$ is a homomorphism and if
$x\in \Ker_A(G)$, then for every $\psi\colon H\to A$ we have that
$\psi\circ\varphi(x)=0$, therefore $\varphi(x)\in\Ker_A(H)$.
In categorical terms the correspondence $G\mapsto \Ker_A(G)$ is a
covariant functor. It turns out that many characteristic subgroups
can be described as intersection of kernels to some group $A$.
In fact we may view $\Ker_A(G)$ as the part of $G$ that cannot be
represented in a product of copies of the group $A$. For example
$\Ker_\ZZ(G)=0$ if, and only if, $G$ is residually free-abelian.

We mentioned before that $\Ker_\ZZ(F)=F'$ and $\Ker_{B(1,n)}(F)=F''$
for every free group $F$.
Recall that a torsion-free abelian group $A$ is \emph{slender}
if every homomorphism
$\varphi\colon \ZZ^\NN\to A$ can be factored through some finite range
free abelian group, i.e., there exists a homomorphism
$\varphi'\colon \ZZ^n\to A$ so that the following diagram commutes
$$\xymatrix{
\ZZ^\NN \ar[rr]^\varphi \ar[dr]_{\mathrm{pr}_n} & & A\\
& \ZZ^n \ar[ur]_{\varphi'}}
$$
(see Fuchs \cite[Ch. XIII]{Fuchs}) where $\mathrm{pr}_n$ is the projection map onto the first $n$ factors of $\ZZ^\NN$. Free abelian groups are slender, subgroups and extensions of slender groups are also slender. A reduced abelian group is slender if, and only if, it does not contain a subgroup isomorphic to the group $\ZZ^\NN$ or to a group of $p$-adic integers for some prime $p$ (cf. Fuchs \cite{Fuchs}).

Katsuya Eda \cite{eda92}
extended this concept to arbitrary groups by defining a group $A$ to be \emph{non-commutatively slender} (nc-slender) if every homomorphism $\varphi\colon \HEG\to A$ from the Hawaiian earring group $\HEG$ to $A$ can be factored through some free (non-commutative) group of finite rank as in $$\xymatrix{
\HEG \ar[rr]^\varphi \ar[dr]_{\mathrm{pr}_n} & & A\\
& F_n \ar[ur]_{\varphi'}}
$$
where $\mathrm{pr}_n$ is the homomorphism induced by collapsing all but the first $n$ loops of the Hawaiian earring.  

\begin{lemma}
\label{lem:lim slender}
If $A$ is slender, then
$$\Ker_A(\ZZ^\NN)=\varprojlim_n \Ker_A(\ZZ^n).$$
Similarly, if $A$ is nc-slender, then
$$\Ker_A(\hF)=\varprojlim_n \Ker_A(F_n) \ \ \text{and} \ \
\Ker_A(\HEG)=\HEG\cap \varprojlim_n \Ker_A(F_n).$$
\end{lemma}
\begin{proof}
The natural projections $\mathrm{pr}_n\colon\ZZ^\NN\to\ZZ^n$ induce an isomorphism $\ZZ^\NN\to\varprojlim \ZZ^n$. By functoriality of $\Ker_A$, the homomorphisms $\Ker_A(\ZZ^\NN)\to \Ker_A(\ZZ^n)$ induced by $\mathrm{pr}_n$ are coherent and induce a homomorphism $\Ker_A(\ZZ^\NN)\to\varprojlim_n \Ker_A(\ZZ^n).$
This homomorphism is injective, because it is a restriction of the isomorphism $\ZZ^\NN\to\varprojlim \ZZ^n$. To prove surjectivity, consider an element $(x_i)\in \varprojlim_n \Ker_A(\ZZ^n)=\ZZ^\NN$.  We need only show that $(x_i)\in \Ker_A(\ZZ^\NN)$. Fix a homorphism $\varphi\colon\ZZ^\NN\to A$. Since $A$ is slender, $\varphi$ can be factored as $\varphi=\varphi'\circ\mathrm{pr}_n$ for some $n$ and some $\varphi'\colon \ZZ^n\to A$. Then $\varphi((x_i))=\varphi'(x_n)$ which is trivial since $x_n \in \Ker_A(\ZZ^n)$.  Thus $\Ker_A(\ZZ^\NN)\to\varprojlim_n \Ker_A(\ZZ^n)$ is surjective
as well.

The proof for the non-commutatively slender cases follow analogously.
\end{proof}

In order to prove the main results of this section we will need a lemma that is reminiscent of Theorem \ref{thm:expansion lim1}. Let $G$ be a subgroup of the inverse limit
of a sequence of groups $\{G_i\}$ such that all projections
$G\to G_i$ are surjective. Furthermore, let $H_i$ be a decreasing
sequence of subgroups of $G$ and for each $i$ let $\overline H_i$ be
the image of $H_i$ in $G_i$. Observe that $G/H_i\cong G_i/\overline H_i$
(a bijection for arbitrary groups $H_i$, and a group isomorphism
if $H_i$ are normal subgroups of $G$).
We can fit the above data in a commutative diagram with
exact rows:
$$\xymatrix{
1 \ar[r] & \bigcap H_i \ar[r]\ar[d]  & G \ar[r]\ar[d]& \varprojlim G/H_i \ar[r]\ar@{=}[d]  &  \lim{^1} H_i \ar[r]\ar[d] & \bullet\\
1 \ar[r] & \varprojlim \overline{H}_i \ar[r] & \varprojlim G_i \ar[r] & \varprojlim G_i/\overline{H}_i \ar[r]  &  \lim{^1} \overline{H}_i \ar[r] & \bullet}
$$
The following lemma is proved by elementary diagram chasing.

\begin{lemma}
\label{lem:alg lim1}
$\Ker(\bigcap H_i \to \varprojlim \overline{H}_i)=
\Ker(G\to\varprojlim G_i)$, thus we have an exact sequence
$$1\longrightarrow\Ker(G\to\varprojlim G_i)\longrightarrow \bigcap H_i
\longrightarrow \varprojlim \overline{H}_i$$
Moreover
$\lim{^1} H_i\to\lim{^1}\overline{H}_i$ is a surjection of homogeneous
sets whose fibres can be naturally identified with the cokernel
of the homomorphism
$$G\to (\varprojlim G_i)/(\varprojlim \overline{H}_i)$$

As a consequence, $\varprojlim{^1}H_i$ is trivial iff $\varprojlim{^1}
\overline H_i$ is trivial
and $G\to (\varprojlim G_i)/(\varprojlim \overline{H}_i)$ is surjective.
\end{lemma}

The Hawaiian earring $H$ can be represented as an inverse limit of
a sequence
$$\cdots \leftarrow X_n\leftarrow X_{n+1} \leftarrow\cdots\leftarrow H,$$
where $X_n$ is a wedge of $n$ circles and the bonding maps are the
obvious projections. The fundamental group of $X_n$ is  the
free group
on $n$ generators $F_n$, so the inverse sequence of spaces gives rise to
a homomorphism from the Hawaiian earring group $\HEG:=\pi_1(H)$ to the
inverse limit of finite rank free groups $\hF:=\varprojlim F_n$. We have already
mentioned the result of Cannon and Conner \cite{Cannon-Conner 1} that
$\HEG\to\hF$ is injective, so that the Hawaiian earring group can be
viewed as a subgroup of $\hF$. It is known that both the group $\HEG$
and its index in $\hF$ are uncountable, and that $\HEG$ can be viewed
as a dense subgroup of $\hF$ with respect to the inverse limit topology
on $\hF$. The following theorem gives a more precise description of the
relation between the two groups. The proof is a combination of algebraic
and geometric techniques developed in the previous sections.

\begin{theorem}
\label{thm:HEG internal}\label{thm:HEG}

The group $\hF$ is equal to the internal product of its subgroups
$\HEG$ and $\Ker_\ZZ(\hF)$, i.e., $\hF=\HEG\cdot \Ker_\ZZ(\hF)$.
\end{theorem}
\begin{proof}
We are going to apply Lemma \ref{lem:alg lim1} to the following data: let $G_i$ be the free group on $i$ generators $F_i$, $G=\HEG$, and $H_i$ the kernel of the epimorphism $\HEG\to \pi_1(X_i)\to H_1(X_i)\cong\ZZ^i$.  Then $\varprojlim G_i=\hF$.  Observe that the image of $H_i$ in $\pi_1(X_i) =F_i$ is precisely the commutator subgroup $F_i'=\Ker_\ZZ(F_i)$, since $\HEG$ maps surjectively onto $\pi_1(X_i)$. The bonding maps in the inverse sequence of groups $\{\overline H_i\}$ are surjective, therefore $\varprojlim^1\overline H_i$ is trivial. Moreover, by Lemma \ref{lem:lim slender}
$\varprojlim\overline H_i=\varprojlim\Ker_\ZZ(F_i)=\Ker_\ZZ(\hF)$. Thus, by Lemma \ref{lem:alg lim1} the formula $\hF=\HEG\cdot \Ker_\ZZ(\hF)$ holds if, and only if, the derived inverse limit $\varprojlim^1 H_i$ is trivial.

By Theorem \ref{thm:fixedX lim1}, the limit $\varprojlim^1 H_i$ is precisely the set
of path-components of the inverse limit of covering maps
that correspond to
the sequence of kernels of the homomorphisms $\{\HEG\to H_1(X_i)\}$.
Analogously as in the examples in Section 3, we can represent this
lifting projection as a limit of covering spaces over the approximations
of the Hawaiian earrings by finite wedges of spheres $X_i$. The covering
space over $X_i$ corresponding to the commutator subgroup $F_i$ can be
identified with the integral 1-dimensional grid in $\RR^i$, i.e.
$$\wX_i =\{(x_1,\ldots,x_i)\in\RR^i\mid x_j\notin\ZZ\ \ \text{for at most
one index}\ j\}.$$
Observe that the bonding maps in the inverse system
$$\wX_1\longleftarrow \wX_2\longleftarrow \wX_3 \longleftarrow\cdots$$
are retractions, and so their inverse limit $\wX$ is the
1-dimensional integral grid in $\RR^\NN$. Alternatively, we may describe
$\wX$ by the following pullback diagram
$$\xymatrix{
\wX \ar[r] \ar[d] & \RR^\NN \ar[d]^{e^\NN}\\
H \ar@{^(->}[r] & (S^1)^\NN
}$$
Clearly, $\wX$ is path-connected, which completes the proof of our claim.
\end{proof}

Note that there one could also consider the commutator subgroup $\hF'$ of
$\hF$, which is however much smaller than $\varprojlim F_n'$. In fact, $\hF>\HEG\cdot \hF'$.

Continuing the previous line of thought we may ask whether $\hF$ can be obtained
by adding some other term of the derived series of $\hF$ to the group $\HEG$. As before, we replace
$\hF''$ with a suitable inverse limit group. We already mentioned that
the second derived group of a free group $F$ can be described as a kernel
with respect to the Baumslag-Solitar group, $F''=\Ker_{B(1,2)}(F)$
(see \cite[Theorem 1]{Conner-Kent-Herfort-Pavesic}). Since $B(1,2)$
is nc-slender (see \cite{ConnerCorson19}), Lemma \ref{lem:lim slender} implies
that $\Ker_{B(1,2)}(\hF)=\varprojlim \Ker_{B(1,2)}(F^n)$. Clearly,
$$\hF''\le \Ker_{B(1,2)}(\hF)\le \Ker_{\ZZ}(\hF).$$

In the next theorem we reverse the reasoning and use our methods to show
that this subgroup of $\hF$ is too small to generate, together with
the Hawaiian earring group the entire group $\hF$.

\begin{theorem}
$\hF\ne\HEG\cdot \Ker_{B(1,2)}\ZZ(\hF)$
\end{theorem}
\begin{proof}
Let us write $B$ for the Baumslag-Solitar group $B(1,2)$.  Then for a
free group of rank $n$, we have
$\Ker_B(F_n)=F_n''=\Ker_\ZZ(\Ker_\ZZ(F_n))$.
We may repeat almost verbatim the algebraic part of the proof of the previous
theorem and obtain that
$\hF=\HEG\cdot \Ker_{B}(\hF)$ if, and only if, the inverse limit
$\widetilde Y$ of the sequence of coverings over the
Hawaiian earring $H$, determined
by the kernels
of homomorphisms $\HEG\to F_i/F_i''$, is path-connected.
Thus, in order to
prove our claim, we must show that $\widetilde Y$ is not path-connected.

We will use the same notation as in the proof of the previous
theorem.
Let $\wX$ be the integral grid in $\RR^\NN$ and let $p\colon \wX\to H$ be
the lifting projection obtained as the limit of the sequence
$$\xymatrix{
\wX_1 \ar[d]_{p_1} & \wX_2 \ar[d]_{p_2} \ar[l] & \wX_3
\ar[d]_{p_3} \ar[l] & \cdots\ar[l] & \wX \ar[d]^p\\
X_1  & X_2 \ar[l] & X_3 \ar[l] & \cdots\ar[l] & H}$$
where $\pi_1(\wX_n)=F_n'$. Let $\wY_n$ be a cover of
$\wX_n$ corresponding to the subgroup $\bigl(\pi_1(\wX_n)\bigr)'=F_n''$
and let $\wY$ be the resulting inverse limit.
We will construct a continuous surjection from $\wY$ to a path-disconnected space.
Since $\pi_1(\wX_2)=F_2'$ is a countably generated free group, we can find
a sequence of homomorphisms $f_i:\pi_1(\wX_2)\to \ZZ$ such that
$$\bigcap\limits_{i=1}^n \Ker(f_i)\subsetneq \bigcap\limits_{i=1}^{n-1} \Ker(f_i)\ \ \text{and}
\ \ \bigcap\limits_{i=1}^\infty \Ker(f_i)=F_2''.$$
Let $\widetilde Z_n'$ be the cover of $\wX_2$ corresponding to the subgroup
$\bigcap\limits_{i=1}^n \Ker\bigl(\pi_1(\wX_2) \stackrel{f_i}{\longrightarrow} \ZZ\bigr).$
The covers $\widetilde Z_n'$ form an inverse sequence, whose limit
$\widetilde Z':=\varprojlim \widetilde Z_n'$ is path-disconnected by Corollary \ref{cor:cov over CW}.
As a consequence, the pullback of $\widetilde Z'$ along the projection $p\colon\wX\to\wX_2$
$$\xymatrix{
\widetilde Z' \ar[d] & \widetilde Z \ar[l] \ar[d]\\
\wX_2 & \wX \ar[l]^p
}$$
yields a lifting
projection $\widetilde Z\to \wX$, whose total space is also path-disconnected. Note that
$\widetilde Z$ can be alternatively obtained as inverse limit od a sequence
$\widetilde Z=\varprojlim \widetilde Z_n$, where $\widetilde Z_n$ is the pull-back of
$\widetilde Z_n'$ along the projection $p_n\colon\wX_n\to\wX_2$. Since commutator subgroups
are characteristic we have $p_{n*}(\pi_1(\wY_n))\le \pi_1(\wY_2)\le \pi_1(\widetilde Z')$, thus
$\pi_1(\wY_n)\le\pi_1(\widetilde Z_n)=p_{n*}^{-1}(\pi_1(\widetilde Z'))$. It follows
that   $\wY_n$ covers $\widetilde Z_n$ for every $n$ and we obtain the
 following diagram:
$$\xymatrix{
\wY_2 \ar[d] & \wY_3 \ar[d] \ar[l] & \cdots \ar[l] & \wY_{n-1}
\ar[d] \ar[l] & \wY_n \ar[d] \ar[l] & \wY_{n+1} \ar[d] \ar[l] & \cdots \ar[l] &\wY\ar[d]\\
\widetilde Z_2 \ar[d] & \widetilde Z_3 \ar[d] \ar[l] & \cdots \ar[l] & \widetilde Z_{n-1} \ar[d]
\ar[l] & \widetilde Z_n \ar[d] \ar[l] & \widetilde Z_{n+1} \ar[d] \ar[l] & \cdots\ar[l] &
\widetilde Z\ar[d]\\
\wX_2  & \wX_3 \ar[l] & \cdots \ar[l] & \wX_{n-1} \ar[l] &
\wX_{n} \ar[l] & \wX_{n+1} \ar[l] & \cdots \ar[l] & \wX}
$$
By construction, the fibre of the lifting projection $\wY\to\wX$ maps surjectively to the fibre of
$\widetilde Z\to\wX$.
Since every point in $\widetilde Z$ (respectively $\wY$) is connected by a path to a point in
the fibre, it follows, that the projection $\wY\to \widetilde Z$ is surjective, therefore
$\wY$ is not path-connected.
\end{proof}
}


\end{document}